\documentclass[oneside,reqno,UKenglish,autoref]{amsart}
\usepackage[a4paper, total={6in, 8in}]{geometry}
\usepackage{amssymb,amsfonts,amsthm,thm-restate}

\usepackage{url}
\usepackage{hyperref}

\usepackage[capitalise,noabbrev,sort]{cleveref}

\newcommand{\EPAGRP}{\ensuremath{\textsc{EPAG-N}}\xspace}
\newcommand{\EPAG}{\ensuremath{\textsc{EPAG}}\xspace}
\newcommand{\EPA}{\ensuremath{\textsc{EPA}}\xspace}
\newcommand{\EPAD}{\ensuremath{\textsc{EPAD}}\xspace}
\newcommand{\IPGCD}{\ensuremath{\textsc{IP-GCD}}\xspace}

 \usepackage{bm}
 \usepackage{xspace}
 \usepackage{mathtools}
 \usepackage{tikz-cd}
 \mathtoolsset{showonlyrefs}
 
 \newcommand{\bi}{\begin{itemize}}
\newcommand{\ei}{\end{itemize}}
\newcommand{\be}{\begin{enumerate}}
\newcommand{\ee}{\end{enumerate}}

  \renewcommand{\ge}{\geqslant} \renewcommand{\le}{\leqslant}

\newcommand{\NP}{\ensuremath{\mathsf{NP}}\xspace} %
\newcommand{\NEXP}{\ensuremath{\mathsf{NEXP}}\xspace} %

\newcommand\GIVENBY{\text{given by}}
\newcommand\AND{\text{and}}
\newcommand\OR{\text{or}}
\newcommand\WHERE{\text{where}}
\newcommand\FOR{\text{for}}
\newcommand\FORALL{\text{for all}}
\newcommand\FORSOME{\text{for some}}

\renewcommand{\implies}{\Rightarrow}
\renewcommand{\iff}{\Leftrightarrow}

\newcommand{\N}{\mathbb{N}}
\newcommand{\Z}{\mathbb{Z}}
\newcommand{\B}{\mathcal{B}}
\newcommand{\X}{\mathcal{X}}
\newcommand{\R}{\mathcal{R}}
\newcommand{\cP}{\mathcal{P}}
\newcommand{\I}{\mathcal{I}}
\newcommand{\J}{\mathcal{J}}
\newcommand{\M}{\mathcal{M}}

\newcommand{\swfi}[1]{{#1}^{\pm1}} 
\newcommand{\pres}[2]{\left\langle{#1}\mid{#2}\right\rangle}
\DeclareMathOperator{\Inv}{Inv}
\DeclareMathOperator{\im}{im}
\newcommand{\Epi}{\ensuremath{\textsc{Epi($\mathcal{I}$,$\B$)}}\xspace}

\newcommand{\ol}{\overline}
\newcommand{\wh}{\widehat}
\newcommand{\wt}{\widetilde}
\newcommand{\id}{\mathrm{id}}
\renewcommand{\mod}{\;\operatorname{mod}\;}

\newtheorem{decision}{Decision Problem}

\theoremstyle{plain}
\newtheorem{theorem}{Theorem}[section]
\newtheorem{lemma}[theorem]{Lemma}
\newtheorem{proposition}[theorem]{Proposition}

\theoremstyle{definition}

\newtheorem{theoremx}{Theorem}

\title[Testing epimorphism onto the bicyclic monoid is \NP-complete]
{Testing epimorphism onto the bicyclic monoid is \NP-complete}

\author[L. Carroll]{Luka Carroll}
\address{School of Mathematical and Physical Sciences, University of Technology Sydney, Ultimo NSW 2007, Australia}
\email{Luka.M.Carroll@student.uts.edu.au}

\author[M. Elder]{Murray Elder}
\address{School of Mathematical and Physical Sciences, University of Technology Sydney, Ultimo NSW 2007, Australia}
\email{murray.elder@uts.edu.au}

\date{\today}

\subjclass[2020]{ 03F30, 	20M05,  	20M18,	68Q17}

\keywords{\NP-complete, bicyclic monoid, epimorphism problem, finitely presented inverse monoid, Presburger arithmetic, greatest common divisor}

\begin{document}

\begin{abstract}
We prove that deciding whether there is a surjective homomorphism from an arbitrary finitely presented inverse monoid onto the bicyclic monoid is \NP-complete. As part of the proof, 
we  show that an extension of  existential Presburger arithmetic which involves greatest common divisors on $n$ arguments
is in \NP, 
extending a recent result of D\'efossez, Haase, Mansutti, and  P\'erez (SODA 2024).

\end{abstract}

\maketitle

\section{Introduction}

The aim of this paper is to prove the following.
\begin{theoremx}\label{thmMAINA:epi_is_NP-complete}
    The following decision problem is \NP-complete: on input a finite 
    inverse monoid presentation for an inverse monoid $\I$, decide whether there exists a surjective homomorphism from  
    $\I$ to the bicyclic monoid.
\end{theoremx}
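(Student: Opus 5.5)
The plan has two halves: membership in \NP via a reduction to existential Presburger arithmetic with gcd constraints, and \NP-hardness via a direct reduction from a known \NP-complete problem.

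\textbf{Membership in \NP.} Write the bicyclic monoid as $\B=\pres{b,c}{bc=1}$. Every element of $\B$ has a unique normal form $c^i b^j$ with $i,j\in\N$, and $(c^ib^j)^{-1}=c^jb^i$. Given a presentation $\pres{X}{R}$ of $\I$, a homomorphism $\I\to\B$ is determined by assigning to each $x\in X$ a pair $(i_x,j_x)\in\N^2$ so that every relation $u=v$ in $R$ holds in $\B$.

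First, I will express ``$u$ and $v$ evaluate to the same element'' in existential Presburger arithmetic. Multiplication in normal forms is piecewise linear:
\[
c^ib^j\cdot c^kb^l = c^{\,i+k-\min(j,k)}\,b^{\,l+j-\min(j,k)}.
\]
Introducing auxiliary variables for each prefix product, together with a case split on the minimum, gives a formula of size polynomial in the presentation.

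Next comes surjectivity, which is the delicate part. The image is an inverse submonoid of $\B$, and it is onto exactly when it contains $b$ (and hence $c=b^{-1}$). Hence the image contains $b$ iff some word over $X^{\pm1}$ evaluates to $b$. I would characterise the inverse submonoid generated by a finite set $\{c^{i_x}b^{j_x}\}$ arithmetically. Its image in $\Z$ under $c^ib^j\mapsto j-i$ is the subgroup $g\Z$ with $g=\gcd_x(j_x-i_x)$. I expect a lemma of the form: the generated inverse submonoid contains $b$ iff this gcd equals $1$. The point is that products of an element with its inverses give idempotents $c^kb^k$ of arbitrary height, which allows cancellation without truncation.

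Surjectivity is then an existential Presburger formula extended by one gcd constraint on $n=|X|$ arguments. Its satisfiability lies in \NP by the extension of the D\'efossez--Haase--Mansutti--P\'erez result announced in the abstract, which I take as established in the body of the paper. The main obstacle is proving the ``gcd $=1$ iff onto'' lemma rigorously. The differences $j_x-i_x$ may be negative and must be handled via $|\cdot|$ and zero entries, and the idempotent-generated case (all differences zero) must be excluded.

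\textbf{\NP-hardness.} I would reduce from a known \NP-complete problem, for instance one-in-three SAT or subset sum. The aim is to build a presentation whose relations force each generator's image into a small set of values encoding truth values, while clause gadgets force the combinatorial constraint. One ingredient is a relation such as $xx^{-1}=1$, which forces $x\mapsto c^ib^0$. Another is relations equating products to idempotents, which force equalities of exponents. Surjectivity would then be guaranteed by a designated generator forced to map to $b$ whenever the constraints are satisfiable. I expect to spend some care ensuring that no spurious non-Boolean solutions exist; adding relations like $x^2=x$ restricts images to idempotents and makes this manageable.
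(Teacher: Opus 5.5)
\textbf{Membership in \NP.} Your encoding of the relations (normal forms $c^ib^j$, auxiliary variables for prefix products, a case split on $\min$) and your appeal to Theorem~\ref{thm:MAINB_EPAGRP_poly_sol} follow the paper. The problem is the surjectivity lemma you plan to prove: it is false, so that step cannot succeed.

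Take the single image $cb^2$. Its difference is $1$, yet $b\notin\Inv\langle cb^2\rangle$. By your own product formula, the $c$-exponent of $w_1w_2$ is $i+k-\min(j,k)\ge i$. So every product $w_1\cdots w_\ell$ has $c$-exponent at least that of $w_1$. Since $cb^2$ and $c^2b$ both have $c$-exponent at least $1$, so does every non-identity element they generate. Idempotents do not rescue this, because multiplying by $c^kb^k$ never lowers an exponent.

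The correct criterion (\cref{thm:gen_set_B}) also requires some generator to be sent to $b^m$ or $c^m$ with $m>0$. The paper adds this condition as the polynomial-size Presburger disjunction $\Gamma(X)$. Your formula without this disjunct can actually be salvaged, but only by an argument about solutions rather than generating sets. Suppose all non-identity generator images have both exponents at least $s\ge1$. Then $1\mapsto1$, $c^ib^j\mapsto c^{i-s}b^{j-s}$ is a homomorphism from the inverse submonoid $\{1\}\cup\{c^ib^j:i,j\ge s\}$ to $\B$, and it preserves differences. Iterating it converts any solution with $\gcd=1$ into one that also meets the extra condition; this terminates because generators with nonzero difference never collapse to $1$.

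\textbf{\NP-hardness.} This half is a statement of intent rather than a reduction, and it lacks the gadget that does the work. Relations carry no constants, and $c^ib^j\mapsto c^{ki}b^{kj}$ is an injective endomorphism of $\B$. Hence solutions are closed under scaling, and the trivial map is always a solution. So no relation can force a generator onto $b$; the scale can only be pinned down by the gcd part of surjectivity. Relations such as $x^2=x$ merely place $x$ somewhere on the infinite chain $c^kb^k$ and contribute $0$ to the gcd, so they give no two-valued behaviour.

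The paper's idea is to play sums against maxima:
\begin{itemize}
\item Impose $zz^{-1}=1$ on every generator. This forces $z\mapsto b^{p}$; it kills the $c$-exponent, not the $b$-exponent as you wrote.
\item The relation $x_iy_i=s$ then gives $p_i+q_i=n$, since such elements multiply additively.
\item The relation $(x_i^{-1}x_i)(s^{-1}s)(y_i^{-1}y_i)=(x_i^{-1}x_i)(y_i^{-1}y_i)$ gives $\max(p_i,q_i,n)=\max(p_i,q_i)$, since idempotents multiply by $\max$. Together these force $\{p_i,q_i\}=\{0,n\}$.
\item The clause relation $t_{j,1}t_{j,2}t_{j,3}=u_js$ forces some literal in each clause to take the value $n$.
\item Every generator image is then a power of $b^n$, so the condition $\gcd=1$ forces $n=1$.
\end{itemize}
Some gadget of this kind must be supplied before your plan becomes a proof of \NP-hardness.
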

We show the problem is in \NP by converting it into a 
logical formula to be satisfied.
Specifically, we reduce it to a particular extension of Presburger arithmetic. 
Let \EPAG denote the existential fragment of the first-order theory of the structure
\begin{equation}
    \langle\Z;0,1,+,\le,(\gcd(\cdot,\cdot)\le d)_{d\in\N_+},(\gcd(\cdot,\cdot)= d)_{d\in\N_+}\rangle.
\end{equation}
Let \EPAGRP denote the extension of \EPAG consisting of all true sentences of the form
\begin{equation}
    \exists\bm{x}\in\Z^n:\Phi(\bm{x})\land\bigwedge_{i=1}^k\gcd(f_{i,1}(\bm{x}), \dots, f_{i,q_i}(\bm{x}))\sim_i d_i
\end{equation}
where $\Phi$ is an open \EPAG formula, 
$f_{i,j}$ are linear polynomials with integer coefficients,
 ${\sim_i}\in\{\le,=\}$, and  $d_i\in\N_+$.

\begin{theoremx}\label{thm:MAINB_EPAGRP_poly_sol}
    If a sentence $\exists\bm{x}\in\Z^n:\Phi(\bm{x})$ is in $\EPAGRP$ then there exists a solution $\bm{a}\in\Z^n$ to $\Phi$ with polynomial bit length. Consequently, deciding membership in \EPAGRP is \NP-complete. 
\end{theoremx}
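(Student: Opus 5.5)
The plan is to reduce \EPAGRP to \EPAG, and then invoke the small-model theorem of D\'efossez, Haase, Mansutti and P\'erez. Their result says that a satisfiable \EPAG formula has a solution of polynomial bit length, so \EPAG is in \NP. The reduction uses the identity
\begin{equation}
\gcd(a_1,\dots,a_q)=\gcd(\gcd(a_1,\dots,a_{q-1}),a_q).
\end{equation}
Two points need care: this reduction cannot simply introduce existentially quantified variables equal to the intermediate gcds, and gcd is not itself a primitive of \EPAG. What is available is only the predicates $\gcd(\cdot,\cdot)\le d$ and $\gcd(\cdot,\cdot)=d$ with constant $d$.

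The first step handles the two cases separately using B\'ezout-type characterisations.

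For $\gcd(f_1,\dots,f_q)\le d$: the gcd $g$ of the values is nonzero, or all the values are zero. Assume it is nonzero; the all-zero case can be added as an extra linear disjunct, with the usual convention. Then $g\le d$ if and only if some $e\in\{1,\dots,d\}$ equals $g$. This in turn holds if and only if $e$ is an integer combination $\sum_j y_j f_j$ and $e$ divides every $f_j$. However, $\sum y_j f_j(\bm x)$ is not linear in the new variables $y_j$.

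So I would instead chain through pairwise gcds. Introduce fresh variables $z_2,\dots,z_q$ intended to satisfy $z_2\mid \gcd(f_1,f_2)$ and so on. Divisibility by a constant $e$ is expressible in Presburger arithmetic, since $e\mid t$ iff $\exists w:\, t=ew$. The statement $\gcd(f_1,\dots,f_q)=e$ is then equivalent to two conditions: $e\mid f_j$ for all $j$, and $\gcd(f_1/e,\dots,f_q/e)=1$.

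It therefore suffices to express ``the values $h_1,\dots,h_q$ are coprime'' using binary gcd predicates. Here I would use the standard lemma that $\gcd(h_1,\dots,h_q)=1$ if and only if there exist integers $c_2,\dots,c_q$ with
\begin{equation}
\gcd\Bigl(h_1,\ \sum_{j\ge2} c_j h_j\Bigr)=1.
\end{equation}
Moreover, the $c_j$ can be taken to be bounded; this is a Chinese-remainder/sieve argument, and in fact $c_j$ of polynomial bit length suffice. The expression $\sum c_jh_j$ is again not linear in $c_j$ if the $c_j$ are variables. The fix is to guess the $c_j$ nondeterministically as constants of polynomial bit length. The problem then becomes a disjunction, which is exponential in size but can be chosen by an \NP machine.

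Each conjunct with $\sim_i$ being $=$ thus becomes: guess $e=d_i$ and constants $c_j$, then impose linear divisibility conditions plus one binary atom $\gcd(\cdot,\cdot)=1$. Each conjunct with $\le$ becomes: guess $e\le d_i$, then the same construction.

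The main obstacle is proving that bounded multipliers $c_j$ exist, uniformly in $\bm x$. They may depend on $\bm x$, but they must have polynomial bit length in the input. I would prove this directly, by choosing $c_j$ so that $h_1$ shares no prime with the combination. The only problematic primes are those dividing $h_1$. If $h_1=0$, swap in a nonzero $h_j$.

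For primes $p\le q$ one needs care. I would try the standard result that one can take $c_j\in\{0,\dots,q\}$, or take $c_j=t^{j}$ for some small $t$. Failing that, I would pass to a first-order-in-$\bm x$ argument. In that argument the \NP machine guesses the small solution first, and verification of the gcd condition is polynomial by the Euclidean algorithm. In that case the real content to prove is the small-solution bound, and that bound would follow from applying the \EPAG small-model theorem to the reduced formula.

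Finally, \NP-hardness is inherited from integer linear programming, which is already a fragment of the problem.
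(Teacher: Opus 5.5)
Your architecture matches the paper's. You replace each $n$-ary constraint by a single binary atom $\gcd\bigl(h_1,\sum_{j}c_jh_j\bigr)$ with \emph{constant} multipliers, add divisibility side conditions, and apply the \EPAG small-witness theorem (Theorem~\ref{thm:EPAG_NP}). The gap is the step that carries all the weight: that some solution $\bm x$ admits multipliers of bit length polynomial in $N=\|\Phi\|$. You do not prove this, and the suggestions you make for it are false. No bound depending only on $q$ exists. For $q=3$ and any $B$, give each primitive pair $(a,b)$ with $|a|,|b|\le B$ its own prime $p_{(a,b)}>B$, let $h_1=\prod p_{(a,b)}$, and use the CRT to choose $h_2\equiv b$ and $h_3\equiv -a \pmod{p_{(a,b)}}$ for every such pair. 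Then $\gcd(h_1,h_2,h_3)=1$. However, every $(c_2,c_3)\ne(0,0)$ with $|c_j|\le B$ is a multiple of some such $(a,b)$, so $p_{(a,b)}\mid c_2h_2+c_3h_3$ and $\gcd(h_1,c_2h_2+c_3h_3)>1$. Hence neither $c_j\in\{0,\dots,q\}$ nor $c_j=t^j$ with small $t$ works. The required size of the multipliers grows with $\omega(h_1(\bm x))$, i.e.\ with the solution, which is a priori unbounded. Reducing the $c_j$ modulo $h_1$ gives only $\|c_j\|\le\|h_1(\bm x)\|$, which is no better than a bound on $\bm x$ itself. Your fallback is circular: the reduced formula has polynomial size only once the multipliers, and hence the solution they are computed from, are already known to be small.

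The paper closes this gap with two ingredients that are missing from your plan.

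First, it uses a quantitative sieve bound, Theorem~\ref{thm:MAINC_k_aryGCD_to_binaryGCD}, which comes from Iwaniec's estimate for Jacobsthal's function. With $c_2=1$, multipliers $0\le m_j\ll\log^2|h_1(\bm x)|$ suffice, i.e.\ only $O(\log\log|h_1(\bm x)|)$ bits.

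Second, it uses an a priori bound on some solution. Lemma~\ref{lem:EPAG-C_NEXP} encodes the $n$-ary gcds in \EPAD via B\'ezout and divisibility, and Theorem~\ref{thm:EPAD_NEXP} then gives a solution $\bm a$ with $\|\bm a\|\le 2^{poly(N)}$. So $\log^2|h_1(\bm a)|\le 2^{poly(N)}$, the multipliers have $poly(N)$ bits, and $\bm a$ satisfies a polynomial-size \EPAG formula to which Theorem~\ref{thm:EPAG_NP} applies.

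You could replace the second ingredient with a minimality argument. Take a solution of least bit length $L$. By the first ingredient its reduced formula has size $poly(N+\log L)$, so Theorem~\ref{thm:EPAG_NP} forces $L\le poly(N+\log L)$, whence $L=poly(N)$. On either route, a multiplier bound polylogarithmic in $|h_1|$ is indispensable, and that is the missing idea in your proposal.
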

To prove Theorem~\ref{thm:MAINB_EPAGRP_poly_sol} we first prove a number-theoretic result, which is itself a consequence of a result of Iwaniec \cite{iwaniec1978jacobsthal}
on the maximal length of intervals 
containing multiples of an arbitrary set of primes.

\begin{theoremx}\label{thm:MAINC_k_aryGCD_to_binaryGCD}
    There exists $c>0$ such that for all $x_1,\dotsc,x_k\in\Z$ with $x_1\ne0$ there exists $0\le m_3,\dotsc,m_k\le c\log^2|x_1|$ such that 
    \[\gcd(x_1,x_2+m_3x_3+\cdots+m_kx_k)=\gcd(x_1,\dotsc,x_k).\]
\end{theoremx}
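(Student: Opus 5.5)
First divide through by $g=\gcd(x_1,\dotsc,x_k)$; the identity for the quotients $x_i/g$ implies it for the $x_i$. We may therefore assume $\gcd(x_1,\dotsc,x_k)=1$ and must find small $m_3,\dotsc,m_k\ge0$ with $\gcd(x_1,x_2+m_3x_3+\dotsb+m_kx_k)=1$. This also covers the degenerate case where some $x_i$ vanish, since those terms simply drop out. Dividing by $g$ can only decrease $|x_1|$, so the bound $c\log^2|x_1|$ for the reduced instance is still valid for the original.

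We proceed by induction on $k$, adding one variable at a time. Set $y=\gcd(x_1,x_2)$ (taking $y=|x_1|$ if $x_2=0$) and $P$ the set of primes dividing $y$. For a fixed $m_3$, a prime $p\mid x_1$ divides $x_2+m_3x_3$ only if $p\mid y$ or $p\mid m_3x_3$ appropriately; reducing everything to the primes of $x_1$ is cleaner. Let $S$ be the set of primes dividing $x_1$. For $p\in S$, consider the linear form $L(\bm m)=x_2+\sum m_ix_i$ modulo $p$. Since $\gcd(x_1,\dots,x_k)=1$ and $p\mid x_1$, some $x_i$ with $i\ge2$ is nonzero mod $p$. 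Hence the set of bad $\bm m$, those with $p\mid L(\bm m)$, is either empty (if $p\nmid x_2$ and $p\mid x_3,\dots,x_k$) or an affine hyperplane mod $p$.

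The approach I will commit to is a one-parameter substitution that reduces everything to Jacobsthal-type intervals. Choose $m_i=t\cdot s_i$, or more simply process the variables successively. Step $j$ replaces $x_2$ by $x_2'=x_2+m_jx_j$, and we demand only that $\gcd(x_1,x_2')$ equals $\gcd(x_1,x_2,x_j)$ after the earlier steps. This is the binary case $k=3$ applied to the triple $(x_1,x_2',x_j)$. The gcd invariant chains correctly: after all steps $\gcd(x_1,x_2^{final})=\gcd(x_1,\dots,x_k)$.

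For $k=3$, write $d=\gcd(x_1,x_2,x_3)$ and divide out, so $d=1$. We need $m$ with $p\nmid x_2+mx_3$ for every prime $p\mid x_1$. Primes with $p\mid x_3$ are automatically fine, since then $p\nmid x_2$. For each prime $p\mid x_1$ with $p\nmid x_3$, exactly one residue $m\equiv -x_2x_3^{-1}\pmod p$ is forbidden. By the Chinese remainder theorem the forbidden set is $\{m : m\equiv a_p \pmod p \text{ for some } p\in S'\}$, where $S'$ consists of at most $\omega(x_1)$ primes. Translating, $m\mapsto m-a$ where $a\equiv a_p\pmod p$ for all $p\in S'$ turns this into finding $m'$ in the shifted interval $[-a,-a+H]$ coprime to $\prod_{p\in S'}p$. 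This is exactly Jacobsthal's function $j(N)$ for $N=\prod_{p\in S'} p$. Iwaniec's theorem gives $j(N)\ll \omega(N)^2\log^2\omega(N)\ll \log^2 N$ up to constants, and $\omega(N)\le\omega(x_1)\ll \log|x_1|/\log\log|x_1|$. Combining these yields $j(N)\le c\log^2|x_1|$ (indeed better), so some $m\in[0,c\log^2|x_1|]$ works. Note $|x_1/d|\le|x_1|$.

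The main obstacle is the multi-step chaining: naive sequential application gives each $m_j\le c\log^2|x_1|$, which is what is claimed. So it suffices provided the invariant is right, and I must check that at step $j$ the needed gcd is with respect to the primes of $x_1$ only, not of $x_2'$, which grows. This is fine because the bound depends only on $\omega(x_1)$. The remaining care is handling $x_2=0$ or $x_j=0$, which are trivial since the forbidden sets become empty or everything. I also need to make the constant from Iwaniec explicit enough for small $|x_1|$: for $|x_1|\le 2$ take $m=0$ or $1$, and enlarge $c$ to absorb bounded cases.
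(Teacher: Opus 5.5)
Your proposal is correct and follows the paper's route. The sequential step $x_2^{(j)}=x_2^{(j-1)}+m_jx_j$, with the invariant $\gcd(x_1,x_2^{(j)})=\gcd(x_1,\dotsc,x_j)$, is exactly the paper's induction via $\gcd(x_1,\dotsc,x_k)=\gcd(\gcd(x_1,\dotsc,x_{k-1}),x_k)$, and you settle the ternary case the same way: forbid one residue modulo each relevant prime of $x_1$, merge the residues by CRT, and apply Iwaniec's Jacobsthal bound. Dividing out $d=\gcd(x_1,x_2,x_3)$ first is just a tidier packaging of the paper's case analysis on $p_i^{s_i+1}$; note also that the forbidden set is never ``everything'', since after normalisation each prime excludes at most one residue.
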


See \cref{sec:InvMon} for the relevant details on finitely presented inverse monoids and the bicyclic monoid. Deciding whether there is an epimorphism from a finitely presented \emph{group}
to a fixed group or class of finitely presented groups \cite{elderVAbel,FriedlL,grochow_et_al,Kup-zombies,Kup-colouring,Remes}, as well as between graphs \cite{subgraph_epi},  has been studied by several authors. The significance of the present work is that it adds to this collection and generalises from groups to inverse monoids. It also demonstrates the difficulty of what might appear to be an easy first case, the bicyclic monoid.

Although
it is introduced here 
as a tool to study epimorphism of inverse monoids, Theorem~\ref{thm:MAINB_EPAGRP_poly_sol} appears to be a new result in its own right, building on work of \cite{IntegerProgGCD,lechner2015complexity} as discussed in \cref{sec:PAstuff}.

After submission to arXiv,  OpenAI announced a new bound for  Jacobsthal's function \cite{OAI:A-quadratic-bound-for-Jacobsthals-function-September-25-2026}.  Replacing \autoref{thm:C(r)_bound} by the sharper (for simplicity quadratic) bound would lead to a bound of $c\left(\frac{\log|x_1|}{\log\log|x_1|}\right)^2$ in \autoref{thm:MAINC_k_aryGCD_to_binaryGCD}. However we chose not to incorporate this.

The paper is organised as follows. In \cref{sec:CoprimeTechnical,sec:PAstuff} we prove Theorems~\ref{thm:MAINB_EPAGRP_poly_sol} and~\ref{thm:MAINC_k_aryGCD_to_binaryGCD} respectively. We then give a short introduction to inverse monoids and the bicyclic monoid in \cref{sec:InvMon}. We establish \NP-hardness for epimorphism onto the bicyclic monoid in \cref{sec:NPhard} via a polynomial time reduction from 3-SAT, and in \cref{sec:inNP} we prove the problem is in \NP.

\subsubsection*{Statement on use of AI tools}

Claude \cite{claude2026lessinfo}
was used in the development of \cref{sec:CoprimeTechnical}. Specifically
we entered the following query: given coprime integers $x_1,\dotsc,x_n$, are there any known results that could bound the size of coefficients needed to write a linear combination of $x_2,\dotsc,x_n$ that is coprime with $x_1$? Claude sketched a proof using the functions $C$ and $\omega$  
(defined in \cref{sec:CoprimeTechnical}) referencing \cite{iwaniec1978jacobsthal,heyman2015gcdshiftedsets}, which provided the foundation of 
the
proof of an initial version of \cref{prop:ternaryGCD_to_binaryGCD} with the hypothesis that $\gcd(x,y,z)=1$.
Using those ideas we we able to arrive at the current version of this proposition which gives 
\cref{thm:MAINC_k_aryGCD_to_binaryGCD}. 
The remainder of the results outside \cref{sec:CoprimeTechnical} were obtained independently of any AI tools, and 
none 
were used to assist in the writing or typesetting of the paper.

\subsubsection*{Notation}
For $n\in\N$ we write $[n]$ for the set $\{1,\dots, n\}$.
We denote the bit length (up to a constant factor) of 
an encoding of an object $x$ by~$\|x\|$. For an integer $x\in\Z$ we declare this to be its bit length in binary: $\|x\|=\lceil\log|x|\rceil+1$. The encoded size $\|\Phi\|$ of a logical formula $\Phi$ is the number of symbols it contains plus the total encoded size of any integers it contains. If $\bm{a}=(a_1,\dotsc,a_n)\in\Z^n$ then $\|\bm{a}\|=n\cdot\max\{\|a_i\|:i\in[n]\}$. We write $poly$ to represent an arbitrary but fixed polynomial in a single variable. Note that 
\begin{equation}
	poly(n)+poly(n) \ = \ poly(n)\cdot poly(n) \ = \ poly(poly(n)) \ = \ poly(n).
\end{equation} 
For $A\subseteq \Z$ and $f,g:A\to \mathbb{R}$ we write $f(x)\ll g(x)$ to mean there exists a constant $c\in\N_+$ independent of $x$ such that $f(x)\le cg(x)$ for all $x\in A$. 

Recall that for $x_1,\dotsc,x_k\in\Z$, $k\ge2$,
their greatest common divisor $\gcd(x_1,\dotsc,x_k)$ is the unique integer $d\in\N_+$ satisfying 
\begin{equation}
    d \mid x_i \ \ \text{for all $i\in[k]$} \qquad\AND\qquad c\mid x_i \ \ \text{for all $i\in[k]$} \ \ \implies \ \ c\mid d.
\end{equation}
when at least one $x_i\ne0$. Otherwise $\gcd(x_1,\dotsc,x_k)=0$.
Bezout's identity states that   
given $x_1,\dots, x_k\in \Z$, there exist $m_1,\dotsc,m_k\in\Z$ such that $m_1x_1+\cdots+m_kx_k=\gcd(x_1,\dotsc,x_k)$.  

\section{From $k$-ary  to binary GCD with logarithmic coefficients}
\label{sec:CoprimeTechnical}
In this section we prove 
\cref{thm:MAINC_k_aryGCD_to_binaryGCD}.
First we recall some basic facts from number theory.
We write $x\equiv_n y$ for $x,y,n\in\Z$ with $n\ge1$ to indicate $x$ is congruent to $y$ modulo $n$, and  $x\mid y$ for  $x$ divides $y$. 
 An integer $x\in\Z$ has a unique multiplicative inverse in $\Z_n=\{0,1,\dotsc,n-1\}$ if and only if $\gcd(x,n)=1$. If $n$ is prime then this holds for every $x\in\Z$ such that $n\nmid x$. The Chinese Remainder Theorem (CRT) states that, if $n_1,\dotsc,n_k\in\Z^+$ is such that $\gcd(n_i,n_j)=1$ for all $i\ne j$, and $ N=n_1\cdots n_k$, then the mapping
\begin{equation}\label{eqn:CRT}
	\Z_N \rightarrow \Z_{n_1}\times\cdots\times\Z_{n_k}\colon x\mapsto(x\mod n_1,\dotsc,x\mod n_k) 
\end{equation}
is a bijection. In particular, if $(x_1,\dotsc,x_k)\in \Z_{n_1}\times\cdots\times\Z_{n_k}$ then there exists $y\in\Z_N$ such that $y\equiv_{n_i}x_i$ for all $1\le i\le k$.

Let $\mathbb{P}_n=\{P\subseteq\N:(\forall p\in P)\; \text{$p$ is prime},\,|P|=n\}$ for $n\in\N$. Define
\begin{equation}
    C(n)=\max\{m\in\N:(\forall P\in\mathbb{P}_n,\,\exists c\in\N,\,\forall x\in[c,c+m-1],\,\exists p\in P)\; p\mid x\}
\end{equation}
where $[c,c+m-1]=\{c,c+1,\dotsc,c+m-1\}$. That is, varying over all sets $P$ of $n$ distinct primes, $C(n)$ is the size of the longest interval $I\subseteq\N$ where each $x\in I$ is divisible by some $p\in P$. Clearly $C(n)$ is monotonically increasing on $\N$.

A bound for $C(n)$ is due to Iwaniec \cite{iwaniec1978jacobsthal}.
\begin{lemma}[{\cite{iwaniec1978jacobsthal}}]\label{thm:C(r)_bound}
    For all $n>1$, $C(n)\ll (n\log n)^2$.
\end{lemma}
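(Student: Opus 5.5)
Since this is Iwaniec's theorem, the plan is to reconstruct the structure of his argument rather than give a new proof. The first step is to restate $C(n)$ in terms of Jacobsthal's function. Fix $P\in\mathbb{P}_n$ and put $N=\prod_{p\in P}p$. An interval $[c,c+m-1]$ lies entirely in $\bigcup_{p\in P}p\Z$ exactly when it contains no integer coprime to $N$. By the CRT \eqref{eqn:CRT}, the covering pattern depends only on $c \mod N$. So $C(n)=\max_{P\in\mathbb{P}_n}\bigl(j(N)-1\bigr)$, where $j(N)$ is the least $m$ such that every $m$ consecutive integers contain one coprime to $N$. It therefore suffices to show that every interval $I$ of length $m=K(n\log n)^2$, for a suitable absolute $K$, contains an integer coprime to $N$.

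The second step is to split $P$ at a threshold $z$, to be chosen of order $m^{\theta}$ for a small fixed $\theta$. Write $P_{\le z}$ and $P_{>z}$ for the two parts. Each large prime $p\in P_{>z}$ removes at most $m/p+1\le m/z+1$ elements of $I$. Hence the large primes remove at most $n(m/z+1)$ elements in total, which is $o(m/\log n)$ for our choice of $m$ and $z$.

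For the small primes I would apply a lower-bound sieve of dimension $\kappa=1$: Rosser--Iwaniec $\beta$-sieve weights, or Selberg's $\Lambda^2$ combined with Buchstab. The aim is
\[
\#\{x\in I:\gcd(x,\textstyle\prod_{p\in P_{\le z}}p)=1\}\ \ge\ m\prod_{p\in P_{\le z}}\Bigl(1-\tfrac1p\Bigr)\bigl(f(s)+o(1)\bigr)-R,
\]
with $s=\log D/\log z$ and $D$ the level of distribution. For an interval, the remainder terms satisfy $|r_d|\le1$, so $R\le\sum_{d<D}\mu^2(d)$, and we may take $D=m^{1-\epsilon}$. By Mertens, the product is at worst $\gg 1/\log n$, since the worst case is when $P$ consists of the first $n$ primes. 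So the main term is $\gg m/\log n\asymp Kn^2\log n$. This exceeds both the large-prime loss and $R$ once $K$ is large, so some $x\in I$ survives.

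The main obstacle is the interplay between the sieve dimension and the size of $P_{\le z}$. A crude Brun or Selberg bound only gives $j(N)\ll n^{2+\epsilon}$, or $\ll 2^{\omega}$-type losses in the remainder. The point of Iwaniec's argument is to use the bilinear, well-factorable form of the $\beta$-sieve remainder so that the level $D$ can be pushed to $\approx m$, with no loss beyond the $\log^2 n$ factor. The most delicate part is optimising $z$ against $n$ so that primes in $P_{\le z}$ that are not among the first $n$ primes are still handled uniformly. I would first try to follow Iwaniec's choice $z=m^{1/2}/\log m$ together with a direct counting of large primes. The full details would be deferred to \cite{iwaniec1978jacobsthal}, as the paper does.
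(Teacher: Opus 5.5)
The paper does not prove this lemma; it imports it from \cite{iwaniec1978jacobsthal}, so citing it is all that is required. Your first step, rewriting $C(n)$ as $\max_P\bigl(j(N)-1\bigr)$ via the CRT, is fine and matches how the paper uses $C$. The sieve sketch in between, however, does not work as stated.

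With $z=m^{\theta}$ and $\theta<1/2$ fixed, the trivial loss $n(m/z+1)\approx nm^{1-\theta}$ is not $o(m/\log n)$. It exceeds $m/\log n$ by a factor $\asymp K^{-\theta}(n\log n)^{1-2\theta}\to\infty$. This is not merely a weak estimate. For $P=\{p_1,\dots,p_n\}$ the primes of $P$ in $(m^{\theta},p_n]$, with $p_n\asymp n\log n$, really do divide a positive proportion (about $1-2\theta$) of the elements of $I$, so they cannot be discarded trivially. Hence every prime of $P$ up to $\asymp n\log n\asymp\sqrt{m/K}$ must lie in the sieved range.

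This forces $s=\log D/\log z\le \log m/\log z=2+O(\log K/\log n)$. There $f(s)=0$ for $s\le 2$ and $f(s)\approx e^{\gamma}(s-2)$ just above $2$. So for $P=\{p_1,\dots,p_n\}$ your main term is only $O(m\log K/\log^2 n)$, not $\gg m/\log n$. Moreover, $f(s)>0$ needs $D>z^2\gg m/K$, and then your bound $R\le\sum_{d<D}\mu^2(d)$ is smaller than this main term only if $K$ grows roughly like $\log^2 n$. Thus the scheme you describe loses about a factor $\log^2 n$ even in the cleanest case, where there are no large primes at all.

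Your fallback $z=m^{1/2}/\log m$ does not escape this. The trivial large-prime loss is then $\asymp nm^{1/2}\log m\asymp\sqrt{K}\,n^2\log^2 n$, which already exceeds $m/\log n\asymp Kn^2\log n$.

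Closing exactly this $\log^2 n$ gap, which sits at the edge $s\approx 2$ of the linear sieve, is the content of Iwaniec's theorem, and your proposal does not supply it. The appeal to a well-factorable remainder is unsubstantiated. Nothing in your sketch produces cancellation in $\sum_d\lambda_d r_d$, and you have only $|r_d|\le 1$. Pushing $D$ towards $m$ therefore just pushes the trivial bound for $R$ up to $\asymp m$, far above the main term.

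So, as an argument, your text reduces to the citation, which is what the paper does. The intermediate outline should be dropped or corrected rather than presented as the structure of the proof.
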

Define $\omega(x)$ to be the number of distinct prime factors of $x\in\Z\setminus\{0\}$. Using \cref{thm:C(r)_bound} and the well-known bound $\omega(x)\ll\frac{\log x}{\log\log x}$ 
it can been shown \cite{heyman2015gcdshiftedsets} that
\begin{lemma}[{\cite{heyman2015gcdshiftedsets}}]\label{lem:logSquared}
For all $x\in\Z$ with $|x|>1 $, 
$	C\big(\omega(x)\big) \ll \log^2 |x| $.
\end{lemma}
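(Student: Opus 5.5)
The statement is \cref{lem:logSquared}, attributed to \cite{heyman2015gcdshiftedsets}, so the task is only to combine \cref{thm:C(r)_bound} with the standard bound $\omega(x)\ll \log|x|/\log\log|x|$. The plan is to substitute $n=\omega(x)$ into Iwaniec's bound and simplify. The only care needed is in the regimes where $\omega(x)$ or $|x|$ is small, since there the bound on $\omega$ is either not meaningful or \cref{thm:C(r)_bound} does not apply.

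\emph{Small $\omega(x)$.} Fix a threshold $N_0\ge 2$, chosen in the next step so that the asymptotic bounds hold for arguments at least $N_0$. If $\omega(x)\le N_0$, then monotonicity gives $C(\omega(x))\le C(N_0)$. The quantity $C(N_0)$ is a finite constant. Indeed, by the Chinese Remainder Theorem, for any set $P$ of $N_0$ primes the divisibility pattern in $\N$ is periodic with period $\prod_{p\in P}p$. Moreover, an interval of length $N_0+1$ cannot be entirely covered by multiples of the primes in $P$, because $C(N_0)\ll (N_0\log N_0)^2$ for $N_0>1$, and for $N_0\le 1$ the claim is trivial. Since $|x|>1$ gives $\log^2|x|\ge \log^2 2>0$, a constant bound is absorbed into the implied constant of $\ll$.

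\emph{Large $\omega(x)$.} If $\omega(x)>N_0$, then \cref{thm:C(r)_bound} gives $C(\omega(x))\ll (\omega(x)\log\omega(x))^2$. Since $|x|\ge \prod_{p\mid x}p\ge 2^{\omega(x)}$, we have $\omega(x)\le \log_2|x|$, and hence $\log\omega(x)\le \log\log|x|+O(1)$. Combining this with $\omega(x)\ll \log|x|/\log\log|x|$, which is valid once $|x|$ is large and is forced here because $\omega(x)>N_0$ makes $|x|>2^{N_0}$, we obtain
\[
\omega(x)\log\omega(x)\ll \frac{\log|x|}{\log\log|x|}\bigl(\log\log|x|+O(1)\bigr)\ll \log|x|.
\]
Squaring gives $C(\omega(x))\ll\log^2|x|$.

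The main obstacle is bookkeeping rather than mathematics. We must choose $N_0$ large enough that $\log\log|x|$ is bounded below by a positive constant, so that the $O(1)$ term is dominated, and then take the maximum of the constants from the two cases. The deep input is entirely \cref{thm:C(r)_bound}.
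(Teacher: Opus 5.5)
Your route is the one the paper points to. The paper gives no proof of its own: it cites \cite{heyman2015gcdshiftedsets} for combining \cref{thm:C(r)_bound} with $\omega(x)\ll\log|x|/\log\log|x|$. Your large-$\omega$ bookkeeping via $|x|\ge 2^{\omega(x)}$ is sound.

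One justification in your small-$\omega$ case is false and should be removed. Intervals of length $N_0+1$ \emph{can} be fully covered: for example $\{2,3\}$ covers $\{2,3,4\}$, and $\{2,3,5\}$ covers $\{2,\dots,6\}$. Also, periodicity with period $\prod_{p\in P}p$ gives no bound that is uniform over $P$. Finiteness of $C(N_0)$ for $N_0\ge 2$ should instead be taken directly from \cref{thm:C(r)_bound}.
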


\begin{proposition}
\label{prop:ternaryGCD_to_binaryGCD}
  Let $x,y,z\in\Z$ with $x\ne 0$. There exists $m\in\N,m\ll\log^2|x|$ such that
    \[\gcd(x,y+mz)=\gcd(x,y,z).\]
\end{proposition}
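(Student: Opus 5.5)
Let $g=\gcd(x,y,z)$; since $x\ne0$ we have $g\ge1$. The plan is to reduce to the case $g=1$ and then choose $m$ by a covering argument controlled by Jacobsthal's function $C$.

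\textbf{Reduction.} Write $x=gx'$, $y=gy'$, $z=gz'$, so that $\gcd(x',y',z')=1$ and $|x'|\le|x|$. Then $\gcd(x,y+mz)=g\gcd(x',y'+mz')$. It therefore suffices to find $m\ll\log^2|x|$ with $\gcd(x',y'+mz')=1$. If $|x'|=1$, take $m=0$. Otherwise let $P$ be the set of prime divisors of $x'$, so $|P|=\omega(x')$.

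\textbf{Splitting the primes.} Put
\[
Q=\{p\in P: p\mid z'\},\qquad R=P\setminus Q .
\]
For $p\in Q$ we have $p\nmid y'$, since otherwise $p$ divides $x',y',z'$. Hence $p\nmid y'+mz'$ for every $m$, and these primes never obstruct. For $p\in R$, $z'$ is invertible mod $p$, so there is a unique residue $r_p\in\Z_p$ with $p\mid y'+mz'$ if and only if $m\equiv_p r_p$. We need an $m\ge0$ avoiding the class $r_p$ modulo every $p\in R$.

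\textbf{Transfer to $C$.} By the CRT, pick $u\in\N$ with $u\equiv_p -r_p$ for all $p\in R$. Then $m\equiv_p r_p$ if and only if $p\mid m+u$. So a bad $m$ is exactly one for which $m+u$ is divisible by some $p\in R$. By the definition of $C$, for the prime set $R$ (padded with extra primes if needed to reach size $\omega(x')$, using monotonicity of $C$), no interval of length $C(\omega(x'))+1$ consists entirely of integers divisible by some $p\in R$. Applying this to the interval $[u,u+C(\omega(x'))]$ gives some $m\in[0,C(\omega(x'))]$ such that $m+u$ is coprime to every $p\in R$. This $m$ satisfies $\gcd(x',y'+mz')=1$.

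\textbf{Bound.} By \cref{lem:logSquared}, $m\le C(\omega(x'))\ll\log^2|x'|\le\log^2|x|$. The degenerate case $|x|\le 2$, where $\log^2|x|$ may be small or zero, has to be absorbed into the implicit constant. Concretely, when $|x'|=1$ we already have $m=0$. When $|x'|>1$ we have $|x|\ge2$, and a single prime gives $C(1)=1$, so $m\le1\ll\log^2 2$.

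\textbf{Main obstacle.} The key step is the shift-by-CRT trick that turns the avoidance of arbitrary residue classes into the avoidance of divisibility, since that is the form in which $C$ is defined. The definition of $C$ quantifies over sets of exactly $n$ primes, so it applies directly to $R$ itself, giving the bound $C(|R|)\le C(\omega(x'))$ by monotonicity. One must also check that the definition of $C$ allows any starting point $c$, which it does.
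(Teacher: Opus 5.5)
Your proof is correct and is essentially the paper's argument. The paper keeps $g=\gcd(x,y,z)$ and asks that $p_i^{s_i+1}\nmid y+mz$, where $s_i$ is the $p_i$-adic valuation of $g$; its three cases ($s_i=r_i$, then $p_i^{s_i+1}\mid z$, then $p_i^{s_i+1}\nmid z$) correspond exactly to your primes not dividing $x'$, your set $Q$, and your set $R$, and it uses the same CRT shift to non-divisibility of $n+m$ and the same bound $m\le C(|I|)\le C(\omega(x))\ll\log^2|x|$, so dividing out by $g$ first is just a cleaner packaging of that valuation bookkeeping.
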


\begin{proof}
    If $x=\pm1$ then the result holds with $m=0$. Suppose instead that $|x|>1$ and let $p_1^{r_1}\cdots p_k^{r_k}$ be the prime factorisation of $x$. Because $\gcd(x,y,z)\mid x$ we have 
    \begin{equation}
        \gcd(x,y,z)=p_1^{s_1}\cdots p_k^{s_k} \quad\FORSOME\quad s_1,\dotsc,s_k\in\N \quad\text{where each}\quad s_i\le r_i.
    \end{equation}
    Similarly, because $\gcd(x,y,z)\mid \gcd(x,y+mz)\mid x$ for all $m\in\N$ we have
    \[
        \gcd(x,y+mz)=p_1^{t_{m,1}}\cdots p_k^{t_{m,k}} \quad\FORSOME\quad t_{m,1},\dotsc,t_{m,k}\in\N \quad\text{where each}\quad s_i\le t_{m,i}\le r_i.
    \] 
    Hence it suffices to show that there exists $m\ll\log^2|x|$ such that for all $i\in[k]$ we have $t_{m,i}=s_i$ (or, equivalently, $p_i^{s_i+1}\nmid x$ or $p_i^{s_i+1}\nmid y+mz$). There are three cases for $i\in[k]$.\medskip
    \begin{description}
        \item[Case 1] If $s_i=r_i$ then $p_i^{s_i+1}\nmid x$ for all $m\in\N$.\smallskip
        \item[Case 2] Suppose $s_i<r_i$ and $p_i^{s_i+1}\mid z$. The former implies $p_i^{s_i+1}\mid x$, which combined with the latter implies $p_i^{s_i+1}\nmid y$ since $p_i^{s_i+1}$ is not a factor of $\gcd(x,y,z)$. Therefore $y+mz\equiv_{p_i^{s_i+1}}y\not\equiv_{p_i^{s_i+1}}0$ and thus $p_i^{s_i+1}\nmid y+mz$ for all $m\in\N$ in this case.\smallskip
        \item[Case 3] Suppose $s_i<r_i$ and $p_i^{s_i+1}\nmid z$. Then $y=a_ip_i^{s_i}$ and $z=b_ip_i^{s_i}$ for some $a_i,b_i\in\Z$ with $p_i\nmid b_i$. This means $b_i$ has a multiplicative inverse $b_i^{-1}$ modulo $p_i$ and thus
    \begin{equation}
    \begin{array}{rclll}
         m \not\equiv_{p_i} -b_i^{-1}a_i  \quad\implies\quad a_i+mb_i \not\equiv_{p_i} 0 & \implies & a_i+mb_i \ne qp_i & & \text{for all $q\in\Z$} \\
         & \implies & a_ip_i^{s_i}+mb_ip_i^{s_i} \ne qp_i^{s_i+1} & & \text{for all $q\in\Z$} \\
         & \implies & y+mz \ne qp_i^{s_i+1} & & \text{for all $q\in\Z$} \\
         & \implies & p_i^{s_i+1}\nmid y+mz.
    \end{array}
    \end{equation}
    \end{description}
    \noindent In the first two cases we have $p_i^{s_i+1}\nmid x$ or $p_i^{s_i+1}\nmid y+mz$ for all $m\in\N$, so we need only find $m\ll\log^2|x|$ such that $m \not\equiv_{p_i} -b_i^{-1}a_i$ for all $i\in I:=\{i\in[k]:s_i<r_i \ \AND \  p_i^{s_i+1}\nmid z\}$ as per the third case. By the CRT there exists $n\in\N$ such that $-n\equiv_{p_i}-b_i^{-1}a_i$ for all $i\in I$. Choose
    \begin{equation}
        m=\min\{m\in\N:p_i\nmid n+m \text{ for all $i\in I$}\}
    \end{equation}
    and note that $m\le C(|I|)\le C(\omega(x))\ll\log^2|x|$ by \cref{lem:logSquared}. Then
    \[
        n+m\not\equiv_{p_i} 0 \quad\implies\quad m\not\equiv_{p_i}-n\equiv_{p_i}-b_i^{-1}a_i \qquad\FORALL\ i\in I.\qedhere
    \]
\end{proof}

\begin{proof}
[Proof of \cref{thm:MAINC_k_aryGCD_to_binaryGCD}]
    For all $x_1,\dotsc,x_k\in\Z$ we have
    \begin{equation}
        \gcd(x_1,\dotsc,x_k)=\gcd\!\big(\!\gcd(x_1,\dotsc,x_{k-1}),x_k).
    \end{equation}
    Then the result follows from \cref{prop:ternaryGCD_to_binaryGCD} using induction on $k$.
\end{proof}

\section{Small-witness property of \EPAGRP formulae}
\label{sec:PAstuff}

We now turn to the proof of \cref{thm:MAINB_EPAGRP_poly_sol}. We start by defining the types of logical formulae we wish to consider. For this section, let $\bm{x}=(x_1,\dotsc,x_n)$ be a vector of variables, let $f,g,h$ denote arbitrary linear polynomials with integer coeffecients and indeterminates $\bm{x}$, and let $\sim$ denote $=$ or $\le$. \emph{Presburger arithmetic} (PA) is the first-order theory of the structure $\langle \Z;0,1,+,\le\rangle$. This is the set of all true first-order sentences\footnote{\emph{i.e.} expressions of the form $Q_1 x_1\dots Q_nx_n\Phi(\bm{x})$ where each $Q_i\in\{\forall,\exists\}$ and $\Phi$ is a formula, where a formula is defined inductively as: an atom, $\neg \Phi_1$, $\Phi_1\wedge \Phi_2$, or $\Phi_1\vee\Phi_2$ where $\Phi_1, \Phi_2$ are formulae.} with atoms of the form $f(\bm{x})\sim g(\bm{x})$ and variables ranging over $\Z$. The \emph{existential fragment of Presburger arithmetic} (\EPA) is the subset of PA containing only existentially quantified sentences. We will be considering three extensions of \EPA:

\be
\item \emph{\EPA with binary divisibility predicates} (\EPAD) is the set of all true, existentially quantified first-order sentences with atoms of the form $f(\bm{x})\sim g(\bm{x})$ or $f(\bm{x})\mid g(\bm{x})$.
\item \emph{\EPA with binary GCD predicates} (\EPAG) is the set of all true, existentially quantified first-order sentences with atoms of the form $f(\bm{x})\sim g(\bm{x})$ or $\gcd\!\big(f(\bm{x}),g(\bm{x})\big)\sim d$ where $d\in\N_+$. Note that the latter is indeed a binary predicate: $d$ is not a variable. Rather, there exists a distinct GCD predicate for each $d\in\N_+$ and ${\sim}\in\{\le,=\}$.
\item \emph{\EPAG with n-ary GCD constraints} (\EPAGRP) is the set of all true sentences of the form
\begin{equation}
    \exists\bm{x}\in\Z^n:\Phi(\bm{x}) \land \bigwedge_{i=1}^k \gcd\!\big(f_{i,1}(\bm{x}),\dotsc,f_{i,p_i}(\bm{x})\big)\sim_i d_i
\end{equation}
where $\Phi(\bm{x})$ is a quantifier-free \EPAG formula with free variables $\bm{x}$, each $d_i\in\N_+$ is a constant, and each ${\sim_i}\in\{\le,=\}$. In contrast to the binary GCD predicates in \EPAG, each constraint $\gcd\!\big(f_{1}(\bm{x}),\dotsc,f_{q}(\bm{x})\big)\sim d$ can have any number of arguments.
\ee

If $\exists\bm{x}\in\Z^n:\Phi(\bm{x})$ is a sentence in any of the aforementioned extensions of \EPA then we say that $\Phi$ is \emph{feasible}. This is equivalent to there existing a \emph{solution} $\bm{a}\in\Z^n$ to $\Phi$, meaning $\bm{a}$ is such that $\Phi(\bm{a})$ evaluates to true.

The problem of deciding membership in \EPAD is known to be in \NEXP due to the small-witness property of \EPAD formulae~\cite{lechner2015complexity}, and has recently been shown to be \NEXP-complete~\cite{barros2026nexpcompletenessexponentialcoefficientgrowth}. In order to obtain a polynomial bound on the size of solutions to \EPAGRP formulae,  we first appeal to the following theorem from \cite{lechner2015complexity} to obtain an exponential bound.

\begin{theorem}
[{\cite[Theorem~14]{lechner2015complexity}}]\label{thm:EPAD_NEXP}
If $\exists\bm{x}\in\Z^n:\Phi(\bm{x})$ is a sentence in $\EPAD$ then there exists a solution $\bm{a}\in\Z^n$ to $\Phi$ such that $\|\bm{a}\|\le 2^{poly(\|\Phi\|)}$.
\end{theorem}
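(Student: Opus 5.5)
We would follow the strategy of Lipshitz's decision procedure for existential Presburger arithmetic with divisibility, as refined in \cite{lechner2015complexity}. The idea is to eliminate variables one at a time, largest first, while tracking the bit length of the coefficients and constants that appear.

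\textbf{Normal form.} First we put $\Phi$ into disjunctive normal form. It suffices to treat one disjunct, and each disjunct has size $poly(\|\Phi\|)$. Negated atoms are removed as follows.
\begin{itemize}
\item $\neg(f\le g)$ is replaced by $g+1\le f$.
\item $\neg(f=g)$ is replaced by a disjunction of two strict inequalities.
\item $\neg(f\mid g)$ is replaced by $\exists r\colon g\equiv r \pmod f$ together with $1\le r\le |f|-1$. Concretely, we introduce a fresh variable $r$, assert $f\mid g-r$, and add the linear bounds on $r$ after guessing the sign of $f$.
\end{itemize}
Next we guess a total preorder on the variables by absolute value, together with their signs. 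This reduces us to a conjunction of three kinds of constraints: linear constraints, positive divisibility atoms $f_i(\bm{x})\mid g_i(\bm{x})$, and the chosen ordering $0\le x_1\le\cdots\le x_n$.

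\textbf{Eliminating the largest variable.} The inductive step removes $x_n$. Linear combinations reduce each divisibility atom to one of two cases.
\begin{itemize}
\item If $x_n$ occurs only in the dividends, the divisors $f_i$ depend only on $x_1,\dots,x_{n-1}$. Then the atoms say $a_ix_n\equiv -g_i'(x_1,\dots,x_{n-1}) \pmod{f_i}$. By the generalised Chinese Remainder Theorem, such a system is solvable in $x_n$ if and only if pairwise compatibility conditions hold. These conditions state that certain gcd's divide certain differences, so they are again divisibility atoms in the smaller variables. The solution set in $x_n$ is then a single residue class modulo $\mathrm{lcm}(f_i)$.
\item If $x_n$ occurs in a divisor, then because $x_n$ is the largest variable we can substitute $x_n = (\text{dividend}\cdot q - \text{rest})/\text{coefficient}$ with a bounded cofactor $q$. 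Alternatively, we can apply the Lipshitz trick of swapping dividend and divisor via $f\mid g \iff f\mid g-kf$, which keeps the constraint linear after guessing a small $k$.
\end{itemize}
What remains on $x_n$ is an interval $[L,U]$ given by linear forms in the smaller variables, together with one residue class. We replace the requirement that $x_n$ exist by the condition $U-L\ge \mathrm{lcm}$, or by a finite disjunction over small offsets $L+j$. The resulting formula no longer contains $x_n$.

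\textbf{Size bound and witness.} Each elimination step should at most multiply the coefficient bit length by a polynomial factor and add polynomially many new atoms. After $n\le\|\Phi\|$ steps, all coefficients therefore have bit length $2^{poly(\|\Phi\|)}$. At the final stage only pure linear constraints remain, and the small-solution bound for integer programming (Papadimitriou, or von zur Gathen--Sieveking) gives a solution of size polynomial in this description, hence $2^{poly(\|\Phi\|)}$. We then back-substitute, choosing each $x_j$ as the least element of its residue class in its interval. Each such choice adds at most the bit length of an lcm of exponentially sized divisors, so $\|\bm{a}\|\le 2^{poly(\|\Phi\|)}$.

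\textbf{Main obstacle.} The hardest step will be controlling the blow-up in the elimination. Naively, iterating the CRT compatibility conditions squares the number of atoms at each step, which would give a doubly exponential bound. The first thing we would try is the observation of \cite{lechner2015complexity}: only divisors whose leading variable has been fixed generate new atoms, and the set of divisors that can ever occur is closed under taking gcds of polynomially many original terms. That closure would keep the total number of distinct terms singly exponential.
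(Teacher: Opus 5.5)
The paper does not prove this statement; it quotes it from \cite[Theorem~14]{lechner2015complexity}. Judged on its own, your reconstruction has a genuine gap at its core step. Eliminating $x_n$ does not produce an \EPAD formula in $x_1,\dots,x_{n-1}$, so the induction never gets started.

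In case (a), the generalised CRT conditions have the shape $\gcd(m_i,m_j)\mid c_i-c_j$, with $m_i,c_i$ linear forms. This is after scaling away the constant coefficients $a_i$ of $x_n$; otherwise the residues involve modular inverses, which are not linear either. These conditions are not atoms $f\mid g$. In \EPAD the condition $\gcd(m,m')\mid c-c'$ is exactly $\exists z\,(m\mid z-c\wedge m'\mid z-c')$, which is the two-congruence instance of the very existential you were removing. So you have traded $x_n$ for up to $\binom{k}{2}$ fresh witnesses (or the Bezout witnesses used in the proof of \cref{lem:EPAG-C_NEXP}). None of these is bounded by $x_1,\dots,x_{n-1}$, and the induction on the number of variables does not close.

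The interval step fails outright. The modulus $\operatorname{lcm}_i f_i$ is a nonlinear function of the remaining variables, so $U-L\ge\operatorname{lcm}$ is not a Presburger constraint. The disjunction over offsets $L+j$ has as many disjuncts as the \emph{value} of that lcm. For instance, for $\exists z\,(y\le z\le w\wedge x_1\mid z\wedge x_2\mid z)$ your recipe does not output an \EPAD formula in $x_1,x_2,y,w$.

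Case (b) is also wrong as stated, because the cofactor is not bounded. The divisor $f=ax_n+f'$ can be small through cancellation: $x_n-x_{n-1}\mid x_{n-1}$ with $x_n=x_{n-1}+1$ has quotient $x_{n-1}$. The identity $f\mid g\iff f\mid g-kf$ removes $x_n$ from $g$ only when its coefficient there is a multiple of $a$. In general you get $f\mid ag-bf$, which is strictly weaker than $f\mid g$ unless $\gcd(f,a)$ is handled separately. Even then, constraints $ax_n+f'\mid G(x_1,\dots,x_{n-1})$ remain, requiring a shifted multiple of $x_n$ to divide $G$, and your scheme has no way to eliminate them.

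The missing idea is Lipshitz's: one does not eliminate variables by formulas at all. After reducing (with care about constant leading coefficients) to conjunctions in \emph{increasing form} together with an ordering of the variables, one proves a local-to-global principle. Such a system has an integer solution compatible with the ordering iff it is solvable locally at every prime. Only an explicitly bounded set of primes needs checking, because for a large prime $p$ one can choose each leading variable to avoid $p$ dividing any divisor. A global solution is then built by choosing $x_1,x_2,\dots$ in turn via the CRT. The work in \cite{lechner2015complexity} is to make this construction quantitative, which is where $2^{poly(\|\Phi\|)}$ comes from.

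Your `main obstacle' paragraph defers exactly this counting to \cite{lechner2015complexity}, which is circular as a proof. Your back-substitution presupposes that each elimination step was an equivalence. As written, neither the witness nor its size bound is established.
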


\begin{lemma}\label{lem:EPAG-C_NEXP}
    If $\exists\bm{x}\in\Z^n:\Phi(\bm{x})$ is a sentence in $\EPAGRP$ then there exists a solution $\bm{a}\in\Z^n$ to $\Phi$ such that $\|\bm{a}\|\le 2^{poly(\|\Phi\|)}$.
\end{lemma}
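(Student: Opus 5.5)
The plan is to translate an \EPAGRP formula into an equisatisfiable \EPAD formula $\Psi$ of polynomial size, such that every solution of $\Psi$ restricts to a solution of $\Phi$. Then \cref{thm:EPAD_NEXP} applies. The key observation is that the predicate $\gcd(f_1,\dotsc,f_q)\sim d$ is expressible with divisibility predicates and auxiliary existentially quantified variables.

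First I will eliminate negations. I would first put the quantifier-free part into a form where each GCD atom occurs positively. A negated atom $\neg(\gcd(f,g)\le d)$ is equivalent to $\gcd(f,g)\ge d+1$, i.e.\ $\gcd(f,g)=0$ or $\gcd(f,g)>d$. I will handle both the $=$ and $\le$ variants uniformly below, with new variables introduced per atom. Since the formula is existential, each atom occurrence can be replaced by a positive-polarity encoding; for negative occurrences I use the encoding of the complementary condition.

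Next, the encoding of $\gcd(f_1,\dotsc,f_q)=e$ with $e$ a new variable (or constant) is
\[
e\ge 0\;\land\;\bigwedge_{j} e\mid f_j\;\land\;\exists u_1,\dotsc,u_q:\ e=\textstyle\sum_j u_jf_j .
\]
The product $u_jf_j$ is not linear, so I instead write it as: $e\mid f_j$ for all $j$, and $e$ equals an integer combination. To avoid multiplication, I use the standard trick that $e\mid f_j$ for all $j$ together with $f_j\mid $ ... fails. Instead I will use a divisibility characterization. We have $\gcd(f_1,\dots,f_q)=e$ iff $e\mid f_j$ for all $j$ and, for the common value, $\gcd$ of $f_j/e$ is $1$. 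Without multiplication, I rely on Bezout in the form that $e$ is a common divisor and every common divisor divides $e$. That universal quantifier is the obstacle. The route I will actually take: reduce $q$-ary to binary by \cref{thm:MAINC_k_aryGCD_to_binaryGCD}, i.e.\ replace $\gcd(f_1,\dots,f_q)$ by $\gcd(f_1,f_2+m_3f_3+\cdots+m_qf_q)$. The coefficients $m_j$ must be chosen, however, and $m_jf_j$ is nonlinear. So instead I use $\gcd(f_1,\dotsc,f_q)=\gcd(\gcd(f_1,f_2),f_3,\dots)$ iteratively with new variables $e_j$. Each binary $\gcd(f,g)=e$ is encoded in \EPAD via the known fact (Lechner et al.) that binary gcd is \EPAD-definable: $e\mid f$, $e\mid g$, and $\exists w:\ e\mid w \wedge$ ... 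Here I would cite that \EPAG embeds polynomially into \EPAD. Concretely, $\gcd(f,g)=e$ with $e\ge1$ iff $e\mid f$, $e\mid g$ and there is $v$ with $f\mid v-e$ and $g\mid v$, which is a linear-with-divisibility form of Bezout $e=af+bg$.

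The zero cases ($f=g=0$) are handled by disjunction. Then $\gcd(\cdot)\le d$ or $=d$ becomes $e_q\le d$ plus $e_q\ge1$ (or $e_q=0$ allowed for $\le$). The size of $\Psi$ is polynomial in $\|\Phi\|$. The main obstacle is verifying the Bezout encoding $\exists v: f\mid v-e,\ g\mid v$ correctly captures $e\in f\Z+g\Z$ (it does: $v=bg$, $v-e=-af$), including sign and zero edge cases. Finally, \cref{thm:EPAD_NEXP} gives a solution of $\Psi$ of bit length $2^{poly(\|\Psi\|)}=2^{poly(\|\Phi\|)}$, and projecting to $\bm{x}$ finishes the proof.
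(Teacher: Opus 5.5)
Your final route is essentially the paper's proof. You push negations down to atoms, give each gcd term a fresh variable $e\ge 0$, and express $\gcd=e$ by requiring that $e$ divides every argument and that $e$ is a Bezout combination written with divisibility ($g\mid v$, $f\mid v-e$). You then apply \cref{thm:EPAD_NEXP} to the resulting polynomial-size \EPAD sentence and project onto $\bm{x}$.

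One slip needs fixing. $\neg(\gcd(f,g)\le d)$ is equivalent to $\gcd(f,g)\ge d+1$ only. It is not equivalent to ``$\gcd(f,g)=0$ or $\gcd(f,g)>d$'', because $\gcd=0\le d$. If you encoded the latter, $\Psi$ would acquire spurious solutions (for instance with $f=g=0$), and then the small solution of $\Psi$ need not project to a solution of $\Phi$.

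Three further remarks.

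\begin{itemize}
\item The `standard trick' you discarded is exactly what the paper uses, and it works directly for $q$ arguments: $\bigwedge_j e\mid f_j\land\bigwedge_j f_j\mid u_j\land\sum_j u_j=e$. Your binary encoding is its $q=2$ case, so the chain of auxiliary variables $e_2,\dotsc,e_q$ is unnecessary, though harmless and still linear in size.
\item Your explicit constraint $e\ge 0$ is genuinely needed. The paper's displayed equivalences leave the sign of the auxiliary variable $c$ free, and the divisibility encoding then admits $c=-\gcd$. For example, $\exists c\colon c\le d\land\gcd(\dots)=c$ becomes satisfiable spuriously.
\item With the convention $0\mid x\iff x=0$, your divisibility encoding with $e\ge 0$ already forces $e=0$ exactly when $f=g=0$. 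So the separate disjunction for the zero case is optional.
\end{itemize}
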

\begin{proof}
    We prove the claim by showing that there exists a sentence $\exists\bm{x}\in\Z^n,\exists\bm{y}\in\Z^m:\Psi(\bm{x},\bm{y})$ in $\EPAD$ such that $\bm{a}\in\Z^n$ is a solution to $\Phi$ if and only if $(\bm{a},\bm{b})\in\Z^{n}\times\Z^m$ is a solution to~$\Psi$ for some $\bm{b}\in\Z^m$, and $\|\Psi\|=poly(\|\Phi\|)$.

    First, let $\Phi'$ be the logically equivalent formula obtained from $\Phi$ by using De Morgan's law to push all negations inwards until they apply directly to atoms. Note that by definition no negations apply to atoms of the form $\gcd(a_1,\dots,a_p)\sim d$, $p>2$, in $\Phi'$.

    Next we show that each literal of the form
    \begin{equation}\label{eqn:EPAG-C_atoms}
        \neg(\gcd(a,b)\sim d) \qquad\AND\qquad \gcd(a_1,\dotsc,a_p)\sim d \quad\WHERE\quad p>2
    \end{equation}
    can be replaced by \EPAD formulae. For this, we note the following equivalences:
    \begin{equation}\label{eqn:EPAG-C_EPAD_equiv1}
    \begin{array}{rcl}
        \neg(\gcd(a,b)= d) & \iff &  \exists c\in\Z: \neg(c=d)\land\gcd(a,b)=c, \\
        \neg(\gcd(a,b)\le d) & \iff &  \exists c\in\Z: \neg(c\le d)\land\gcd(a,b)=c, \\
        \gcd(a_1,\dotsc,a_p)\le d & \iff &  \exists c\in\Z: c\le d\land\gcd(a_1,\dotsc,a_p)=c.
    \end{array}
    \end{equation}
    Next, recall that if $c$ is a common divisor of $a_1,\dotsc,a_p\in\Z$ then $c$ divides any linear combination of $a_1,\dotsc,a_p$. Hence if $d\in\N$ is such that $d\mid a_i$ for all $i\in[p]$ and if there exist $m_1,\dotsc,m_p\in\Z$ for which $m_1a_1+\cdots+m_pa_p=d$ then $d=\gcd(a_1,\dotsc,a_p)$. Bezout's identity gives the converse. That is,
    \begin{equation}\label{eqn:EPAG-C_EPAD_equiv2}
    \begin{array}{rcl}
         \gcd(a_1,\dotsc,a_p)=d & \iff & \exists u_1,\dotsc,u_p\in\Z:\bigwedge_{i=1}^p d\mid a_i \land \bigwedge_{i=1}^p a_i\mid u_i \land \sum_{i=1}^p u_i=d. 
    \end{array}
    \end{equation}
    By applying to $\Phi'$ the equivalences in \eqref{eqn:EPAG-C_EPAD_equiv1}, then those in \eqref{eqn:EPAG-C_EPAD_equiv2} (introducing fresh variables in each case) we obtain a logically equivalent formula $\Phi''$ whose atoms are all \EPAD atoms. We then pull out the newly introduced quantifiers (which remain existential since negations apply only to atoms in $\Phi''$) to obtain the sentence $\exists\bm{x}\in\Z^n,\exists\bm{y}\in\Z^m:\Psi(\bm{x},\bm{y})$ where $\bm{y}$ is a tuple containing the newly introduced variables, which is in $\EPAD$ since $\Phi$ is feasible by assumption. Clearly the sentence $\exists\bm{x}\in\Z^n,\exists\bm{y}\in\Z^m:\Psi(\bm{x},\bm{y})$ fulfills its requirements.

    By \cref{thm:EPAD_NEXP}, there exists a solution $(\bm{a},\bm{b})$ to $\Psi$ with $\|\bm{a}\|,\|\bm{b}\|\le2^{poly(\|\Psi\|)}=2^{poly(\|\Phi\|)}$. Therefore $\bm{a}$ is a solution to $\Phi$ with $\|\bm{a}\|\le2^{poly(\|\Phi\|)}$, as required.
\end{proof}

In contrast to \EPAD, the complexity of deciding membership in \EPAG is \NP-complete. The problem inherits \NP-hardness from EPA (which itself is a consequence of the hardness of integer programming \cite{papadimitriou1981complexity}). 
A special case of \EPAG formulae are known to be in \NP due to their small-witness property \cite{IntegerProgGCD}. 
This is claimed to hold for all  \EPAG formulae, but not explicitly proven, so for completeness we include a proof of the following.

\begin{restatable}[Consequence of {\cite[Theorem~1]{IntegerProgGCD}}]{theorem}{EPAGisNP}\label{thm:EPAG_NP}
If $\exists\bm{x}\in\Z^n:\Phi(\bm{x})$ is a sentence in $\EPAG$ then there exists a solution $\bm{a}\in\Z^n$ to $\Phi$ such that $\|\bm{a}\|\le poly(\|\Phi\|)$.
\end{restatable}

Define \IPGCD (\emph{integer programming with GCD constraints}) as the subset of \EPAG consisting of true sentences of the form
\begin{equation}\label{eqn:IP-GCD_sentence}
	\exists\bm{x}\in\Z^n:\bigwedge_{i=1}^k f(\bm{x})\le 0\land \bigwedge_{i=1}^\ell \gcd\!\big(f_i(\bm{x}),g_i(\bm{x})\big)\approx_i d_i
\end{equation}
where each $f,g$ are linear polynomials with integer coefficients and indeterminates $\bm{x}$, each $d_i\in\N_+$, and each ${\approx_i}\in\{=,\le,\ne,\ge\}$. (Strictly speaking, \EPAG formulae do not contain the symbols $\ne$ or $\ge$, but atoms using these are logically equivalent to formulae in \EPAG as discussed in the proof of \cref{thm:EPAG_NP}.) Défossez \emph{et al.} prove the following.
\begin{theorem}[{\cite[Theorem~1]{IntegerProgGCD}}]\label{thm:IP-GCD_NP}
    If $\exists\bm{x}:\Phi(\bm{x})\in\IPGCD$ then there exists a solution $\bm{a}\in\Z^n$ to $\Phi$ such that $\|\bm{a}\|\le poly(\|\Phi\|)$.
\end{theorem}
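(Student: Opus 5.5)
The plan follows the strategy of D\'efossez \emph{et al.}: separate the linear (polyhedral) part of a sentence of the form \eqref{eqn:IP-GCD_sentence} from the arithmetic (GCD) part, and reduce everything to finding a small integer point that avoids a family of ``bad residue classes'' modulo primes.

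\emph{Step 1: normalise the GCD atoms.} I would write each $\gcd(f_i,g_i)\approx_i d_i$ as a disjunction, guessed nondeterministically, of constraints of two kinds. The first kind is divisibility by a fixed constant, $d\mid f_i$ and $d\mid g_i$. The second kind is either coprimality, $\gcd(f_i/d,g_i/d)=1$, or non-coprimality. For example, $\gcd(f,g)=d$ becomes $d\mid f$, $d\mid g$ and $\gcd(f/d,g/d)=1$, where $f/d$ is encoded by a fresh variable $y$ with $dy=f$. The constraints $\le d$ and $\ne d$ are handled by guessing the value $c$ of the GCD, either $c\le d$ or $c\ne d$. The constraint $\ge d$ is the delicate one, because the witness $c$ could be large. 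Here I would argue that if a solution exists, then one exists in which the extra common factor is a single prime power $p^e$ of polynomial bit length; this is done by shrinking with the polyhedral bounds of Step 2. The result is a system consisting of linear inequalities, congruences modulo an explicit modulus $M$ of polynomial bit size, coprimality constraints $\gcd(f,g)=1$, and constraints of the form $p^e\mid f,g$.

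\emph{Step 2: the polyhedral part.} The congruences are absorbed by a change of variables on the lattice $M\Z^n$ shifted by a residue. By the standard small-solution theorems for integer programming (Papadimitriou, and Pottier-type bounds), the integer points of the polyhedron are a finite union of sets $\bm v+\{\sum_j\lambda_j\bm w_j:\lambda_j\in\N\}$ in which $\bm v$ and the $\bm w_j$ have polynomial bit length. Substituting this parametrisation, each coprimality constraint becomes $\gcd(F(\bm\lambda),G(\bm\lambda))=1$ for affine forms $F,G$ in $\bm\lambda$ whose coefficients have polynomial bit length. It therefore suffices to find $\bm\lambda$ with polynomially bounded bit length satisfying all of these constraints at once.

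\emph{Step 3: the sieve, which I expect to be the main obstacle.} A prime $p$ makes the pair $(F,G)$ fail at $\bm\lambda$ exactly when $\bm\lambda$ lies in the affine subspace $\{F\equiv G\equiv 0\}$ modulo $p$. I would treat small and large primes differently.
\begin{itemize}
\item For the finitely many ``small'' primes $p$, namely those dividing some nonzero minor or resultant of the coefficient data, the number of such primes and their sizes are polynomially bounded in bit length. A feasible solution certifies a residue class of $\bm\lambda$ modulo $\prod p$ that avoids every bad subspace, and we fix that class using the CRT.
\item For every other prime, the bad set modulo $p$ is a proper subspace of codimension at least one, or of codimension two when $F$ and $G$ are independent.
\end{itemize}
I would then restrict $\bm\lambda$ to a line $\bm\lambda_0+t\bm u$ on which every such subspace meets the line in at most one residue class modulo $p$. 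Choosing a small $t$ that avoids one residue modulo each relevant prime is exactly a Jacobsthal-type problem, and \cref{thm:C(r)_bound} bounds the required $t$ by a polynomial in the number of primes involved. That number of primes is in turn at most $\omega$ of a product of polynomially many numbers of polynomial bit length.

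The hard part is the case in which $F$ and $G$ are proportional, or dependent modulo infinitely many primes. Here the bad set has codimension one, and the line must also keep $F$ itself from vanishing. I would first dispose of the degenerate pairs by reducing them to a single-form constraint, and then apply the same sieve.
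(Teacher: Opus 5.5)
The paper gives no proof of this statement. It is imported as \cite[Theorem~1]{IntegerProgGCD}, where D\'efossez et al.\ pass to a fragment of existential Presburger arithmetic with divisibility, use Lipshitz's local-to-global principle, and show that only polynomially many primes of polynomial bit length need to be considered before combining the local solutions. Your route (parametrise the integer points of the polyhedron, then sieve along a line using a Jacobsthal bound) is genuinely different. It is plausible for systems built only from linear constraints, congruences and \emph{coprimality} constraints.

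The genuine gap is in Step~1, in the treatment of every atom that demands a \emph{nontrivial} common factor: $\gcd\ge d$, $\gcd\ne d$ (your ``guess $c\ne d$'' has no a priori size bound on $c$), and the non-coprimality constraints these produce. A sieve can only \emph{avoid} residue classes; it cannot force a common prime. So Steps~2--3 rely entirely on your assertion that a witness modulus of polynomial bit length can be guessed, and this is asserted, not proved. The mechanism you suggest, ``shrinking with the polyhedral bounds'', cannot supply it, because polyhedral bounds carry no divisibility information. Moreover, the assertion as literally stated is false: $\gcd(x,x+6)\ge 6$ is satisfiable, but every solution has $\gcd=6=2\cdot 3$, so no prime power $\ge 6$ divides both arguments.

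What you actually need is this: if some solution has a huge prime $q$ as a common factor, then a prime of polynomial bit length can take its place while all linear and coprimality constraints remain satisfiable. Proving that requires showing that solvability modulo $q$ (together with the remaining constraints) does not depend on $q$ once $q$ avoids explicit invariants of the system. You then still need to pass from this local information to a small global solution. This is exactly the local-to-global content supplied by D\'efossez et al. Your sieve might plausibly serve as the local-to-global step, but the independence-from-$q$ argument is missing entirely. As written, your proof covers only the $=$ and $\le$ atoms.

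Step~3 also needs repairs, though these look routine.
\begin{itemize}
\item With several coprimality constraints, a single large prime can exclude several residue classes of $t$. \cref{thm:C(r)_bound}, however, concerns one excluded class per \emph{distinct} prime. You should either treat all primes below a polynomial threshold as ``small'' (fixed via the CRT) and bound the remaining ones by a counting argument, or invoke a multi-class version of Jacobsthal's bound.
\item You must choose $\bm\lambda_0$ and $\bm u$ so that, for each non-degenerate pair, the restricted forms $\alpha+\beta t$ and $\gamma+\delta t$ satisfy $\alpha\delta-\beta\gamma\ne 0$. This is what makes the set of relevant primes finite and polynomial.
\item You must also ensure that no prime divides all four of $\alpha,\beta,\gamma,\delta$; otherwise every $t$ is bad modulo that prime. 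This has to hold while $\bm\lambda_0$ stays in the prescribed residue classes and in $\N^m$.
\item Degeneracy must be judged after the substitution of Step~2, since forms that are independent in $\bm x$ can become proportional in $\bm\lambda$.
\end{itemize}
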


\begin{proof}[Proof of \cref{thm:EPAG_NP}]
	We begin by transforming $\Phi$ into an equivalent formula $\Psi$ without negations and with atoms of the form
	\begin{equation}\label{eqn:PAG_NP}
		f(\bm{x})\le 0 \qquad\OR\qquad \gcd(f(\bm{x}),g(\bm{x}))\approx d
	\end{equation}
	where ${\approx}\in\{=,\ne,\ge,\le\}$ and $d\in\N_+$. We do this by using De Morgan's law to push negations inward until they apply directly to atoms, then using the following logical equivalences:
    \begin{equation}
        \begin{array}{rcl}
            a = b & \iff &  a-b\le 0\land b-a\le 0, \\
            \neg(a=b) & \iff & a-b+1\le0\lor b-a+1\le0,\\
            a\le b & \iff & a-b\le0, \\
            \neg(a\le b) & \iff & b-a+1\le0, \\
            \neg(\gcd(a,b)=d) & \iff & \gcd(a,b)\ne d, \\
            \neg(\gcd(a,b)\le d) & \iff & \gcd(a,b)\ge d+1.
        \end{array}
    \end{equation}
    Clearly $\|\Psi\|$ is linear in $\|\Phi\|$.

    Now, suppose $\bm{a}\in\Z^n$ is a solution to $\Phi$ and thus also to $\Psi$. The formula $\Psi$ consists of a finite number of nested conjuctive and disjunctive clauses. That is, 
    \begin{equation}
        \Psi=\bigwedge_{i=1}^{k_1}\bigvee_{j=1}^{\ell_{1,i}}\Theta_{1,i,j} \quad\text{where each}\quad \Theta_{1,i,j}=\bigwedge_{i=1}^{k_2}\bigvee_{j=1}^{\ell_{2,i}}\Theta_{2,i,j} \quad\text{where each}\quad \Theta_{2,i,j}=\cdots
    \end{equation}
    and so on, until eventually
    \begin{equation}
        \Theta_{m-1,i,j}=\bigwedge_{i=1}^{k_m}\bigvee_{j=1}^{\ell_{m,i}}\Theta_{m,i,j} \quad\text{where each $\Theta_{m,i,j}$ is an atom of the form \eqref{eqn:PAG_NP}.}
    \end{equation}
    Consider the outermost layer of disjunctive clauses. Because $\Psi(\bm{a})$ is true, for each $i\in[k_1]$ we have that $\Theta_{1,i,r_i}(\bm{a})$ is true for some $r_{i}\in[\ell_{1,i}]$. Therefore $\Psi'(\bm{a}):=\bigwedge_{i=1}^{k_1}\Theta_{1,i,r_i}(\bm{a})$ is also true. We continue moving inwards, replacing each disjunctive clause $\bigvee_{j=1}^{\ell_{x,i}}\Theta_{x,i,j}$ with a single formula $\Theta_{x,i,j}$ from that clause such that $\Theta_{x,i,j}(\bm{a})$ is true, until we obtain a formula~$\Upsilon$ which is a conjunction of atoms of the form \eqref{eqn:PAG_NP}. Note that $\exists\bm{x}:\Upsilon(\bm{x})$ is an $\IPGCD$ sentence with $\|\Upsilon\|=poly(\|\Phi\|)$. By \cref{thm:IP-GCD_NP} there exists a solution $\bm{b}\in\Z^n$ to $\Upsilon$ with $\|\bm{b}\|\le poly(\|\Upsilon\|)=poly(\|\Phi\|)$. By construction, $\bm{b}$ is also a solution to $\Psi$ (and thus to $\Phi$ as well), completing the proof.
\end{proof}

The exponential bound on solutions to \EPAGRP formulae from \cref{lem:EPAG-C_NEXP}, the logarithmic bound on coefficients needed to convert $n$-ary
to binary GCDs from \cref{thm:MAINC_k_aryGCD_to_binaryGCD}, and the polynomial bound on solutions to \EPAG formulae from \cref{thm:EPAG_NP} yields \cref{thm:MAINB_EPAGRP_poly_sol}.

\begin{proof}[Proof of \cref{thm:MAINB_EPAGRP_poly_sol}]
    Let $\exists\bm{x}\in\Z^n:\Phi(\bm{x})$ be a sentence in $\EPAGRP$ with 
    \begin{equation}
        \Phi(\bm{x}):= \Psi(\bm{x}) \land \bigwedge_{i=1}^k \gcd\!\big(f_{i,1}(\bm{x}),\dotsc,f_{i,p_i}(\bm{x})\big)\sim_i d_i.
    \end{equation}
    where $\Psi(\bm{x})$ is a quantifier-free \EPAG formula. Write $N=\|\Phi\|$. By \cref{lem:EPAG-C_NEXP} there exists a solution $\bm{a}\in\Z^n$ to $\Phi$ with $\|\bm{a}\|\le 2^{poly(N)}$. We will construct an \EPAG formula $\Theta$ from~$\Phi$ such that: $\bm{a}$ is a solution to $\Theta$, any solution $\bm{b}\in\Z^n$ to $\Theta$ is a solution to $\Phi$, and with $\|\Theta\|=poly(N)$.
    
    Let $i\in[k]$. There are two cases.\medskip
    \begin{description}
        \item[Case 1] Suppose $f_{i,j}(\bm{a})=0$ for all $j\in[p_i]$. There are two subcases.\smallskip
        \item[Subcase 1.1] If $\sim_i$ denotes $=$ then $\gcd\!\big(f_{i,1}(\bm{a}),\dotsc,f_{i,p_i}(\bm{a})\big)=d_i$ is false since $d_i>0$. This contradicts $\bm{a}$ being a solution to $\Phi$, so we can exclude this case.\smallskip
        \item[Subcase 1.2] If $\sim_i$ denotes $\le$ then we use the substitution
        \begin{equation}
            \gcd\!\big(f_{i,1}(\bm{x}),\dotsc,f_{i,p_i}(\bm{x})\big)\le d_i \quad\rightarrow\quad \bigwedge_{j=1}^{p_i}f_{i,j}(\bm{x})=0.
        \end{equation}
        By assumption, $\bm{a}$ is a solution to $\bigwedge_{j=1}^{p_i}f_{i,j}(\bm{x})=0$ in this case, and if $\bm{b}\in\Z^n$ is a solution to $\bigwedge_{j=1}^{p_i}f_{i,j}(\bm{x})=0$ then $\gcd\!\big(f_{i,1}(\bm{b}),\dotsc,f_{i,p_i}(\bm{b})\big)=0\le d_i$.\smallskip
        \item[Case 2] Suppose $f_{i,j}(\bm{a})\ne0$ for some $j\in[p_i]$. Without loss of generality we assume $f_{i,1}(\bm{a})\ne0$. By \cref{thm:MAINC_k_aryGCD_to_binaryGCD}, there exists
        \begin{equation}
            g_i(\bm{x}):=f_{i,2}(\bm{x}) + m_{i,3}f_{i,3}(\bm{x}) + \cdots + m_{i,p_i}f_{i,p_i}(\bm{x})
        \end{equation}
        such that 
        $\gcd\!\big(f_{i,1}(\bm{a}), g_i(\bm{a})\big) = \gcd\!\big(f_{i,1}(\bm{a}),\dotsc,f_{i,p_i}(\bm{a})\big)$ and where each
        \begin{equation}
    		m_{i,j} \ll \log^2|f_{i,1}(\bm{a})| \ \ \implies\ \  \|m_{i,j}\| \le poly\big(\log\|f_{i,1}(\bm{a})\|\big) \le poly\big(\log 2^{poly(N)}\big) = poly(N),
    	\end{equation}
        implying $\|g_i(\bm{x})\|\le poly(N)$. We consider two subcases.\smallskip
        \item[Subcase 2.1] If $\sim_i$ denotes $\le$ then we use the substitution
        \begin{equation}
            \gcd\!\big(f_{i,1}(\bm{x}),\dotsc,f_{i,p_i}(\bm{x})\big)\le d_i \quad\rightarrow\quad \gcd\!\big(f_{i,1}(\bm{x}), g_i(\bm{x})\big) \le d_i.
        \end{equation}
        Then $\bm{a}$ remains a solution to the latter. Conversely, if $\bm{b}\in\Z^n$ is a solution to $\gcd\!\big(f_{i,1}(\bm{x}), g_i(\bm{x})\big) \le d_i$ then there exists $c\le d_i$ such that $\gcd\!\big(f_{i,1}(\bm{b}), g_i(\bm{b})\big)=c$. By Bezout's identity there exists $s,t\in\Z$ such that
        \begin{equation}
            c \ = \ sf_{i,1}(\bm{b})+tg_i(\bm{b}) \ = \ sf_{i,1}(\bm{b}) +tf_{i,2}(\bm{b}) + sm_{i,3}f_{i,3}(\bm{b}) + \cdots + sm_{i,p_i}f_{i,p_i}(\bm{b}).
        \end{equation}
        But this implies $\gcd\!\big(f_{i,1}(\bm{b}),\dotsc,f_{i,p_i}(\bm{b})\big)\mid c\le d_i$. Hence $\bm{b}$ is a solution to 
        \begin{equation}
            \gcd\!\big(f_{i,1}(\bm{x}),\dotsc,f_{i,p_i}(\bm{x})\big)\le d_i.
        \end{equation}
        \item[Subcase 2.2] If $\sim_i$ denotes $=$ then we use the substitution
        \[
            \gcd\!\big(f_{i,1}(\bm{x}),\dotsc,f_{i,p_i}(\bm{x})\big)= d_i \quad\rightarrow\quad \gcd\!\big(f_{i,1}(\bm{x}), g_i(\bm{x})\big) = d_i\land\bigwedge_{j=1}^{p_i}\gcd(f_{i,j}(\bm{x}),d_i)=d_i.
        \]
        Note that $\gcd(f_{i,j}(\bm{x}),d_i)=d_i\ \iff\ d_i\mid f_{i,j}(\bm{x})$. As in the previous case, $\bm{a}$ remains a solution to the substituted formula. We also have that any solution $\bm{b}\in\Z^n$ to the substituted formula satisfies $\gcd\!\big(f_{i,1}(\bm{b}),\dotsc,f_{i,p_i}(\bm{b})\big)\mid d_i$ and $d_i\mid f_{i,j}(\bm{b})$ for all $j\in[p_i]$ and is thus also a solution to $\gcd\!\big(f_{i,1}(\bm{x}),\dotsc,f_{i,p_i}(\bm{x})\big)= d_i$.
    \end{description}
    
    Let $\Theta$ be the formula obtained from $\Phi$ by applying the above substitutions for each $i\in[k]$. Then $\exists\bm{x}\in\Z^n:\Theta(\bm{x})$ is a sentence in $\EPAG$ since $\bm{a}$ is a solution to $\Theta$, and $\|\Theta\|=poly(N)$. By \cref{thm:EPAG_NP}, there exists a solution $\bm{b}\in\Z^n$ to $\Theta$ with $\|\bm{b}\|\le poly(N)$. But any solution to~$\Theta$ is also a solution to $\Phi$.

    It follows immediately that deciding membership in \EPAGRP is in \NP: given a $\Phi$ of size~$N$ we can guess a solution $\|\bm{b}\|\le poly(N)$ and verify it by evaluating the truth value of~$\Phi(\bm{b})$ in $poly(N)$ time. \NP-completeness then follows from the hardness of \EPA.
\end{proof}

\section{ Inverse monoids, presentations and homomorphisms}\label{sec:InvMon}

A \textit{monoid} $\M$ is a set equipped with an associative binary operation and that contains an (identity) element $1$ such that  $1a=a1=a$ for all $a\in\M$. An \textit{idempotent} is an element $e\in\M$ satisfying $e^2=e$, and  we write $E(\M)$ for the set of idempotents in a monoid $\M$.

 If $\M$ and $\mathcal{N}$ are monoids with identity elements $1_\M$ and $1_{\mathcal{N}}$, respectively, then a 
  \textit{(monoid) morphism}
is a mapping $\phi:\M\rightarrow\mathcal{N}$ satisfying
\begin{equation}
	1_\M\phi=1_{\mathcal{N}} \qquad\AND\qquad (ab)\phi=(a\phi)(b\phi) \quad\FORALL\ a,b\in\M.
\end{equation}
An \textit{epimorphism} (resp. \textit{isomorphism}) is a surjective (resp. bijective) morphism. If there exists an isomorphism between monoids $\M$ and $\mathcal{N}$ then we say that they are \textit{isomorphic}.

In any monoid $\M$, a \textit{(generalised) inverse} of $a\in\M$ is an element $b\in\M$ such that $aba=a$ and $bab=b$. An \textit{inverse monoid} $\I$ is a monoid with an \textit{inversion} operation ${}^{-1}:\I\rightarrow\I$ satisfying
\begin{equation}\label{eqn:inv_mon_def}
	\big(a^{-1}\big)^{-1}=a, \quad aa^{-1}a=a, \quad\AND\quad\; aa^{-1}bb^{-1}=bb^{-1}aa^{-1} \quad\FORALL\ a,b\in\I.
\end{equation}
It follows from the definition that $a^{-1}aa^{-1}=a^{-1}$ for all $a\in\I$, implying $a^{-1}$ is the inverse of $a$, and that this inverse is unique. We also have that $E(\I)=\{aa^{-1}:a\in\I\}$. 
The set~$E(\I)$ has a natural partial order given by $e\le f \ \iff \ e=ef$. 

An \textit{inverse submonoid} is a subset $\J\subseteq\I$ that is itself an inverse monoid with $1_\J=1_\I$. If $X\subseteq\I$ then we write $\Inv\langle X\rangle$ for the 
smallest inverse submonoid of $\I$ that contains $X$ (i.e., the intersection of all submonoids of $\I$ that contain $X$), called the \textit{inverse submonoid generated by $X$}. If $\Inv\langle X\rangle=\I$ or, equivalently,
\begin{equation}
	\I=\{x_1\cdots x_k:x_1,\dotsc,x_k\in\swfi{X}\} \quad\WHERE\quad \swfi{X}=X\cup X^{-1}=X\cup\{x^{-1}:x\in X\},
\end{equation}
then we say $X$ is a \textit{generating set} for $\I$. If $\I$ and $\J$ are inverse monoids and $\phi:\I\rightarrow\J$ a morphism then $\phi$ satisfies ${(a\phi)^{-1}=(a^{-1})\phi}$ for all $a\in\I$.
We have $\im(\phi)$ is an inverse submonoid of $\J$, and if $X$ generates $\I$ then $X\phi$ generates $\im(\phi)$.

Let $X$ be a set of symbols called an \textit{alphabet}. The \textit{free monoid} on $X$ is the set $X^*$ of words over $X$ under concatenation with $1\in X^*$ being the empty word. We write $|w|$ for the length of a word $w\in X^*$. Any monoid $\M$ and mapping $\alpha:X\rightarrow\M$ induces a morphism
\begin{equation}
	\phi:X^*\rightarrow\M \quad\GIVENBY\quad (x_1\cdots x_k)\phi = (x_1\alpha)\cdots(x_k\alpha) \quad\FORALL\ x_1,\dotsc,x_k\in X.
\end{equation}
For an alphabet $X$ we write $X^{-1}=\{x^{-1}:x\in X\}$, which is an alphabet disjoint from $X$ and consisting of formal inverses of the letters in $X$. For an alphabet $X$ we write $\swfi{X}=X\cup X^{-1}$.

A \emph{congruence} on a monoid $\M$ is an equivalence relation $\rho\subseteq \M\times \M$ for which 
\begin{equation}
    (a_1,a_2),(b_1, b_2)\in\rho \quad\implies\quad (a_1b_1, a_2b_2)\in\rho \qquad\FORALL\ \ a_1,a_2,b_1,b_2\in\M
\end{equation}
If $\rho$ is a congruence on $\M$ then the quotient $\M/\rho=\big\{[a]:a\in\M\big\}$ is a monoid under the induced operation $[a][b]=[ab]$ with identity $[1]\in\M/\rho$. 
If $R$ is a relation on $\M$ we write $R^\sharp$ for the smallest congruence on $\M$ containing $R$ (which equals the intersection of all congruences on $\M$ that contain $R$),  called the \textit{congruence generated by $R$}. 

If $\rho$ is a congruence on $\M$ then the mapping $\rho^\natural:\M\rightarrow\M/\rho:a\mapsto[a]$ is an epimorphism. 
The \textit{kernel} of a morphism $\phi:\M\rightarrow\mathcal{N}$ is the congruence $\ker(\phi) = \{(a,b)\in\M:a\phi=b\phi\}$. The \textit{fundamental homomorphism theorem (FHT)} for monoids is as follows.
\begin{theorem}[FHT]\label{thm:fht}
	Let $\M$ and $\mathcal{N}$ be monoids, let $\phi:\M\rightarrow\mathcal{N}$ be a morphism, and let~$\rho$ be a congruence on $\M$. If $\rho\subseteq\ker(\phi)$ then the mapping $\psi:\M/\rho\rightarrow \mathcal{N}:[a]\mapsto a\phi$ is a morphism with $\im(\psi)=\im(\phi)$.
\end{theorem}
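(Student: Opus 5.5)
The plan is to check the two claims of the FHT directly: first that $\psi$ is a well-defined function, then that it is a morphism, and finally that its image coincides with $\im(\phi)$. Nothing beyond the definitions of congruence, quotient monoid and kernel is needed.

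\emph{Well-definedness.} An element of $\M/\rho$ is a class $[a]$, which may have several representatives, so we must show that $[a]=[b]$ forces $a\phi=b\phi$. If $[a]=[b]$ then $(a,b)\in\rho$. Since $\rho\subseteq\ker(\phi)$, we get $(a,b)\in\ker(\phi)$, that is, $a\phi=b\phi$. Hence the rule $[a]\mapsto a\phi$ does not depend on the representative, and $\psi$ is a genuine function $\M/\rho\to\mathcal{N}$. This is the only step with any content, and it is exactly where the hypothesis $\rho\subseteq\ker(\phi)$ is used.

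\emph{Morphism property.} The identity of $\M/\rho$ is $[1_\M]$, and $[1_\M]\psi=1_\M\phi=1_{\mathcal{N}}$ because $\phi$ is a morphism. For products, the induced operation on the quotient gives $[a][b]=[ab]$, so
\[
([a][b])\psi=[ab]\psi=(ab)\phi=(a\phi)(b\phi)=([a]\psi)([b]\psi).
\]
The fact that $[a][b]=[ab]$ is a well-defined operation is part of the quotient construction recalled above, which uses that $\rho$ is a congruence.

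\emph{Image.} Every element of $\M/\rho$ has the form $[a]$ for some $a\in\M$, so
\[
\im(\psi)=\{[a]\psi : a\in\M\}=\{a\phi : a\in\M\}=\im(\phi).
\]
Equivalently, $\phi=\rho^\natural\psi$, and $\rho^\natural$ is surjective, so $\phi$ and $\psi$ have the same image.
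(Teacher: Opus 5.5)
Your proof is correct and complete. Well-definedness is exactly where $\rho\subseteq\ker(\phi)$ enters, and the morphism and image claims follow from $[a][b]=[ab]$ and the surjectivity of $\rho^\natural$. The paper states this result without proof, as a standard fact, so there is no argument of its own to compare against; yours is the usual textbook proof.
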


Define the notation
\begin{equation}
	\big(x^{-1}\big)^{-1}=x^{-1} \qquad\AND\qquad (y_1\cdots y_k)^{-1}=y_k^{-1}\cdots y_1^{-1}
\end{equation}
for all $x\in X$ and $y_1,\dotsc,y_k\in \swfi{X}$. Define the relation  $\varrho$ on $(\swfi{X})^*$ as
\begin{equation}\label{eqn:tau_def}
	\varrho = \Big\{(uu^{-1}u,u):u\in (\swfi{X})^*\Big\}\cup\Big\{(uu^{-1}vv^{-1},vv^{-1}uu^{-1}):u,v\in (\swfi{X})^*\Big\}.
\end{equation}
We call $\varrho^\sharp$ the \emph{Wagner congruence} on $X$. Observe that  $\varrho$ describes the equivalences necessary for a quotient of $(\swfi{X})^*$ to be an inverse monoid; consequently, $(\swfi{X})^*/\rho$ is an inverse monoid (with $[w]^{-1}=[w^{-1}]$) if and only if $\varrho\subseteq\rho$.

Let $\I$ be an inverse monoid. If is $R$ a relation on $(\swfi{X})^*$ and 
$\I$ is isomorphic to $(\swfi{X})^*/(\varrho\cup R)^\sharp$ 
then we say that $\I$ has an \textit{inverse monoid presentation} $\Inv\pres{X}{R}$.

An inverse monoid $\I$ is \textit{finitely presented} if both $X$ and $R$ are finite.
We define the size of a finite inverse monoid presentation \[\big\|\Inv\pres{X}{R}\big\|=|X|+\sum_{(u,v)\in R}\big(|u| + |v|\big).\]

An \textit{inverse monoid equation (without constants)} is a pair $(u,v)\in(\swfi{X})^*\times(\swfi{X})^*$. In the context of equations we call the letters in $X$ \textit{variables}. A solution to $(u,v)$ over an inverse monoid $\I$ is a mapping $\sigma:X\rightarrow\I$ such that the morphism $\ol\sigma$ induced by
\begin{equation}
	\sigma':\swfi{X}\rightarrow\I:\left\{\begin{array}{ll}
		x\mapsto x\sigma \\
		x^{-1}\mapsto (x\sigma)^{-1}
	\end{array}\right.
\end{equation}
satisfies $u\ol\sigma=v\ol\sigma$. (Note that mapping $\sigma'$ is uniquely determined by $\sigma$ since the inverse of each element $x\sigma\in\I$ is unique, so we can refer to $\ol\sigma$ as the morphism induced by $\sigma$ without confusion.) In other words, the mapping $\sigma$ specifies elements of $\I$ that can be substituted for the variables in $X$ so that the equation $u=v$ holds in $\I$. A \textit{system} of inverse monoid equations is a relation $R\subseteq(\swfi{X})^*\times(\swfi{X})^*$. A mapping $\sigma:X\rightarrow\I$ is a solution to $R$ over $\I$ if it is a solution over $\I$ for all inverse monoid equations $(u,v)\in R$.

We can treat the relations $R$ in an inverse monoid presentation $\Inv\pres{X}{R}$ as a system of equations by interpreting $X$ as a set of variables. Doing so reduces the problem of testing whether a mapping $\sigma:\Inv\pres{X}{R}\rightarrow\I$ defined on generators is a morphism to determining if $\sigma$ is a solution to $R$ over $\I$. This is summarised in the following proposition (a consequence of \cref{thm:fht}).
 Note that in \cref{thm:von-dyck}, $[u]$ represents the element of $\I$ corresponding to the word $u\in(\swfi{X})^*$

\begin{restatable}{proposition}{vondyck}\label{thm:von-dyck}
	Let $\I=\Inv\pres{X}{R}$ and $\J$ be inverse monoids.
	\begin{enumerate}
		\item[(i)]\label{thm:von-dyck1} If $\sigma:X\rightarrow\J$ is a solution to $R$ over $\J$ and $\ol\sigma:(\swfi{X})^*\rightarrow\J$ is the morphism induced by~$\sigma$, then $\phi:\I\rightarrow\J:[u]\mapsto u\ol\sigma$ is a morphism. Furthermore, if $X\sigma$ generates~$\J$ then $\phi$ is an epimorphism.
		\item[(ii)]\label{thm:von-dyck2} Conversely, if $\phi:\I\rightarrow\J$ is a morphism then $\sigma:X\rightarrow\J:x\mapsto[x]\phi$ is a solution to $R$ over~$\J$. Furthermore, if $\phi$ is an epimorphism then $X\sigma$ generates $\J$.
	\end{enumerate}
\end{restatable}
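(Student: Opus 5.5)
Both parts reduce to the fundamental homomorphism theorem (\cref{thm:fht}), together with the fact that the defining congruence of $\I$ is generated by $\varrho\cup R$. Throughout, write $\pi=(\varrho\cup R)^\sharp$, identify $\I$ with $(\swfi{X})^*/\pi$, and write $[u]$ for the class of $u$.

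For (i), let $\ol\sigma:(\swfi{X})^*\to\J$ be the morphism induced by $\sigma'$. The first step is to show $\varrho\subseteq\ker(\ol\sigma)$.

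\begin{itemize}
\item By construction $\ol\sigma$ respects formal inverses: $(u^{-1})\ol\sigma=(u\ol\sigma)^{-1}$. This follows by induction on $|u|$, using $(y_1\cdots y_k)^{-1}=y_k^{-1}\cdots y_1^{-1}$ and the corresponding identity $(ab)^{-1}=b^{-1}a^{-1}$ in the inverse monoid $\J$. That identity is a standard consequence of uniqueness of inverses and commutation of idempotents in \eqref{eqn:inv_mon_def}.
\item With this in hand, each pair $(uu^{-1}u,u)$ maps to $(aa^{-1}a,a)$ with $a=u\ol\sigma$, so it lies in the kernel.
\item Each pair $(uu^{-1}vv^{-1},vv^{-1}uu^{-1})$ maps to $(aa^{-1}bb^{-1},bb^{-1}aa^{-1})$, which is an equality by the axioms \eqref{eqn:inv_mon_def} of $\J$.
\end{itemize}

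Since $\sigma$ is a solution to $R$ over $\J$, we also have $R\subseteq\ker(\ol\sigma)$. As $\ker(\ol\sigma)$ is a congruence, it contains $\pi$. \cref{thm:fht} then gives that $\phi:[u]\mapsto u\ol\sigma$ is a well-defined morphism with $\im\phi=\im\ol\sigma$. Moreover $\im\ol\sigma$ consists of all products of elements of $X\sigma\cup(X\sigma)^{-1}$. So if $X\sigma$ generates $\J$, then $\phi$ is surjective.

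For (ii), let $\phi:\I\to\J$ be a morphism and put $\sigma:x\mapsto[x]\phi$. The key step is to check that the morphism induced by $\sigma$ equals the composite $\pi^\natural\phi$, i.e.\ $u\ol\sigma=[u]\phi$ for every word $u$.

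\begin{itemize}
\item The two sides agree on letters of $X$ by definition.
\item They agree on $x^{-1}$ because $[x^{-1}]=[x]^{-1}$ in $\I$, and morphisms of inverse monoids preserve inverses, as recalled in the text.
\item Both maps are monoid morphisms from the free monoid, so agreement on generators forces $u\ol\sigma=[u]\phi$ for all $u$.
\end{itemize}

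Then for each $(u,v)\in R$ we have $[u]=[v]$, so $u\ol\sigma=v\ol\sigma$, and $\sigma$ is a solution to $R$. Finally, $X$ generates $\I$, so $X\phi=X\sigma$ generates $\im\phi$; if $\phi$ is onto, $X\sigma$ generates $\J$.

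The only step needing care is the compatibility of $\ol\sigma$ with the formal inversion, including the anti-automorphism property $(ab)^{-1}=b^{-1}a^{-1}$ in $\J$, which makes $\varrho\subseteq\ker\ol\sigma$ work. Everything else is formal bookkeeping with \cref{thm:fht}.
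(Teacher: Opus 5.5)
Your proposal is correct and follows essentially the same route as the paper: for (i) you show that $\ol\sigma$ commutes with formal inversion, so that $\varrho\cup R\subseteq\ker(\ol\sigma)$, and then apply \cref{thm:fht}; for (ii) you show $u\ol\sigma=[u]\phi$ for every word $u$ and read off both claims. The only cosmetic difference is in the surjectivity part of (ii), where you invoke the general fact that $X\phi$ generates $\im(\phi)$, whereas the paper writes out the product expression for an arbitrary element of $\J$ explicitly.
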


\begin{proof}
	 (i): 
	If $u=x_1^{n_1}\cdots x_k^{n_k}$ for some $x_1,\dotsc,x_k\in X$ and ${n_1,\dotsc,n_k\in\{-1,1\}}$ then
	\begin{equation}
		\big(u^{-1}\big)\ol\sigma = \big(x_k^{-n_k}\cdots x_1^{-n_1}\big)\ol\sigma = \big(x_k^{-n_k}\sigma'\big)\cdots \big(x_1^{-n_1}\sigma'\big) = (x_k\sigma)^{-n_k}\cdots(x_1\sigma)^{-n_1} = (u\ol\sigma)^{-1}.
	\end{equation}
	It then follows from $\J$ being an inverse monoid that $\varrho\subseteq\ker(\ol\sigma)$. We also have that $u\ol\sigma=v\ol\sigma$ for all $(u,v)\in R$ because $\sigma$ is a solution to $R$ and thus $R\subseteq\ker(\ol\sigma)$. Therefore $(R\cup\varrho)^\sharp\subseteq\ker(\ol\sigma)$ and $\phi$ is a morphism by \cref{thm:fht}. If in addition $X\sigma$ generates $\J$ then $\im(\phi)=\im(\ol\sigma)=\J$, as required.

\begin{figure}[ht]
	\begin{center}
		\begin{tikzcd}[column sep=small, row sep=small]
			& & \swfi{X} \arrow[to=3-4, dashed, "\displaystyle{\sigma'}"'] \arrow[to=1-5, dashed, "\scriptstyle{\id}"] 
			& & (\swfi{X})^* \arrow[to=3-4, "\displaystyle{\ol\sigma}"] \arrow[to=2-7, "\scriptstyle{\big((R\cup\varrho)^\sharp\big)^\natural}"] & & \\
			X \arrow[to=1-3, dashed, "\scriptstyle{\id}"]  \arrow[to=3-4, dashed, "\displaystyle{\sigma}"'] & & & & & & \I \arrow[to=3-4, "\phi"] & \hspace{-6mm}\left(=\frac{(\swfi{X})^*}{(R\cup\varrho)^\sharp}\right) \\ 
			& & & \J
		\end{tikzcd}
	\end{center}
	\caption{Commutative diagram illustrating the proof of \cref{thm:von-dyck}\eqref{thm:von-dyck1}. Dashed arrows represent set mappings. Solid arrows represent morphisms. Arrows marked with $\id$ are identity mappings.}
\end{figure}

\noindent
	(ii): First, recall that $[x^{-1}]=[x]^{-1}$ and $[x]^{-1}\phi=\big([x]\phi\big)^{-1}$ for all $x\in X$. Hence if $u\in (\swfi{X})^*$ then there exist $x_1,\dotsc,x_k\in X$ and $n_1,\dotsc,n_k\in\{-1,1\}$ such that
	\begin{equation}\label{eqn:von-dyck2.1}
		[u]\phi = \big[x_1^{n_1}\cdots x_k^{n_k}\big]\phi = \big([x_1]\phi\big)^{n_1}\cdots \big([x_k]\phi\big)^{n_k} = (x_1\sigma)^{n_1}\cdots (x_k\sigma)^{n_k}.
	\end{equation}
	Let $(u,v)\in R$ and let $\ol\sigma:(\swfi{X})^*$ be the morphism induced by $\sigma$. Then by \eqref{eqn:von-dyck2.1} there exists $x_1,\dotsc,x_k\in X$ and $n_1,\dotsc,n_k\in\{-1,1\}$ such that
	\begin{equation}
		[u]\phi = (x_1\sigma)^{n_1}\cdots (x_k\sigma)^{n_k} = \big(x_1^{n_1}\sigma'\big)\cdots\big(x_k^{n_k}\sigma'\big) = u\ol\sigma.
	\end{equation}
	Similarly, we have $[v]\phi=v\ol\sigma$. But $[u]=[v]$ because $(u,v)\in R$ and thus $u\ol\sigma=[u]\phi=[v]\phi=v\ol\sigma$. Suppose that $a\in\J$ and $\phi$ is an epimorphism. Then there exists $[u]\in\I$ such that $[u]\phi=a$, and by \eqref{eqn:von-dyck2.1} there exists $x_1,\dotsc,x_k\in X$ and $n_1,\dotsc,n_k\in\{-1,1\}$ such that
	\[a = [u]\phi = (x_1\sigma)^{n_1}\cdots(x_k\sigma)^{n_k}. \qedhere \]
\end{proof}

\subsection{The bicyclic monoid}

The \textit{bicyclic monoid} $\B$ is the inverse monoid given by the inverse monoid presentation $\Inv\pres{a}{aa^{-1}=1}$. It is immediate that every word in $\{a,a^{-1}\}^*$ can be reduced to a unique word of the form $a^{-m}a^n$ for some $m,n\in\N$ using the relation $aa^{-1}=1$, and that the (reduced) product of two such words is given by $a^{-m}a^n a^{-p}a^q=a^{-(m-n+t)}a^{q-p+t}$ where $t=\max(n,p)$.
As such, we can also characterise $\B$ as the inverse monoid with underlying set $\N\times\N$, binary operation
\begin{equation}
	(m,n)(p,q) = \big(m-n+\max(n,p),q-p+\max(n,p)\big),
\end{equation}
identity $1=(0,0)$, and inversion $(m,n)^{-1}=(n,m)$. Consequently, if $(m,n)\in\B$ then
\begin{equation}\label{eqn:idempotent_B}
	(m,n)(m,n)^{-1} = (m,m) \qquad\AND\qquad (m,n)^{-1}(m,n) = (n,n)
\end{equation}
and thus $E(\B)=\{(m,m):m\in\N\}$ with, counterintuitively, natural order
\begin{equation}\label{eqn:order_E(B)}
	(m,m) \le (n,n) \quad\iff\quad  (m,m)=(m,m)(n,n) \quad\iff\quad m\ge n.
\end{equation}
Define $\lambda$ and $\rho$ as the left and right projections of $\B$ onto $\N$; that is,
\begin{equation}
	\lambda(m,n) = m \qquad\AND\qquad \rho(m,n) = n \qquad\FORALL\ (m,n)\in\N\times\N.
\end{equation}
Define the difference mapping $\delta:\B\rightarrow\Z:(m,n)\mapsto m-n$. A short calculation will show that $\delta$ is a morphism (where the operation on $\Z$ is addition).

The morphisms into $\B$ that we are studying are defined on generators. Hence the necessary and sufficient conditions for these morphisms to be surjective are tied directly to the necessary and sufficient conditions for a subset of $\B$ to be a generating set. The subsemigroups of $\B$ were studied in detail in \cite{ruskuc2005subsemigroups}.  \cite[Theorem~8.1]{ruskuc2005subsemigroups} describes the subsemigroup generated by an arbitrary set $B\subseteq\B$, and is necessarily very technical and more general than what is needed for our purposes. For this reason we 
restate, in simpler terms, the consequence of that theorem which is relevant here, with a self-contained proof.

\begin{theorem}\label{thm:gen_set_B}
	Let $B=\{b_1,\dotsc,b_k\}\subseteq\B$. Then $\Inv\langle B\rangle=\B$ if and only if:
	\begin{enumerate}
		\item[(i)]\label{thm:gen_set_B_i} there exists $m>0$ such that $(m,0)\in B$ or $(0,m)\in B$, and
		\item[(ii)]\label{thm:gen_set_B_ii} $\gcd\!\big(\delta(b_1),\dotsc,\delta(b_k)\big)=1$.
	\end{enumerate}
\end{theorem}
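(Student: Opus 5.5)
We prove the two directions separately, using the description of $\B$ as $\N\times\N$ together with the morphism $\delta$ and the projections $\lambda,\rho$.

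\emph{Necessity.} Suppose $\Inv\langle B\rangle=\B$. Since $\delta$ is a morphism to $(\Z,+)$ with $\delta(b^{-1})=-\delta(b)$, every element of $\Inv\langle B\rangle$ has $\delta$-value in the subgroup $\gcd(\delta(b_1),\dotsc,\delta(b_k))\Z$. Because $\delta(1,0)=1$, condition (ii) follows.

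For (i), we need some generator that is not of the form $(p,q)$ with $p,q\ge1$. Write $(1,0)$ as a product $c_1\cdots c_r$ with each $c_j\in B\cup B^{-1}$. The key observation is that $\min(\lambda,\rho)$ cannot decrease under multiplication. From the formula $(m,n)(p,q)=(m-n+t,\,q-p+t)$ with $t=\max(n,p)$, we get
\[
m-n+t\ge m \quad\text{and}\quad q-p+t\ge q .
\]
Hence a product whose first factor has $\lambda\ge1$ has $\lambda\ge1$, and a product whose last factor has $\rho\ge1$ has $\rho\ge1$.

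Since $\rho(1,0)=0$, the last factor $c_r$ must have $\rho(c_r)=0$. Since also $\lambda(c_1)\le\lambda(1,0)=1$, a little more care is needed. The cleanest route is the stronger claim that the set
\[
S=\{(m,n): m\ge1,\ n\ge1\}\cup E(\B)
\]
is closed under products and inverses. If every element of $B$ lies in $S$, then $\Inv\langle B\rangle\subseteq S\not\ni(1,0)$, a contradiction. So some $b\in B$ lies outside $S$, meaning $b=(m,0)$ or $b=(0,m)$ with $m>0$, which is (i).

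Closure of $S$ needs a short case check:
\begin{itemize}
\item A product of two elements with all coordinates $\ge1$ again has coordinates $\ge1$, by the inequalities above.
\item Multiplying by an idempotent $(e,e)$: if $e=0$ it is the identity; if $e\ge1$ the coordinates stay $\ge1$ or the result is idempotent.
\item Idempotents commute and form a semilattice, so products of idempotents stay in $E(\B)$.
\end{itemize}
This verification is the main point of care in necessity.

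\emph{Sufficiency.} By passing to inverses, which leaves both conditions unchanged, we may assume $(m,0)\in B$ with $m>0$. Then $x=(m,0)$ and $x^{-1}=(0,m)$ satisfy
\[
x^{-1}x=(m,m),\qquad x^{j}=(jm,0),\qquad x^{-j}x^{j}=(jm,jm).
\]
The plan has three steps.
\begin{enumerate}
\item First show that $\Inv\langle B\rangle$ contains some $(d,0)$ with $d\equiv 1$ modulo $m$, or more directly contains $(1,0)$. Since $(1,0)$ generates $\B$ as an inverse monoid, this finishes the proof.
\item Use conjugation to shift generators. For $b=(p,q)$ and large $j$, the element $x^{-j}\,b\,x^{j}$ has the form $(p',q')$ with $p'-q'=\delta(b)$ and both $p',q'$ large, so it is "absorbed" at high levels.
\item Pick integers $n_i$ with $\sum n_i\delta(b_i)=1$ by Bezout's identity. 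Combine the elements $b_i^{\pm1}$, suitably bracketed by powers of $x$ and its inverse, to obtain an element of $\delta$-value $1$ of the form $(N+1,N)$. Multiplying by $x^{-j}$ and $x^{j}$, and using that $x$ shifts the coordinates by $m$, then yields $(r+1,0)$ and hence, after adjusting by multiples of $m$, the element $(1,0)$.
\end{enumerate}

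The main obstacle is the last step: getting from an element with $\delta=1$ at a high level down to $(1,0)$. Elements $(N+1,N)$ with large $N$ do not generate $\B$ by themselves. The hard part is showing that $(1,0)$ itself, not just an element with the right $\delta$-value, lies in $\Inv\langle B\rangle$.

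The approach we would try first works with the set $D=\{\lambda(c):\ \rho(c)=0,\ c\in\Inv\langle B\rangle\}$. We would show that $D$ is closed under addition and under the operation $a\mapsto a-\delta(b)$ whenever $a$ is large enough. This makes $D$ contain every $a\ge a_0$ in a residue class generating $\Z$ together with $m$. Finally, from $(a,0)$ and $(a+1,0)$ in $\Inv\langle B\rangle$, we compute
\[
(a,0)^{-1}(a+1,0)=(0,a)(a+1,0)=(1,0),
\]
which completes the proof.
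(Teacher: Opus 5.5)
Your proof is correct and is essentially the paper's. For necessity, your own first observation already finishes the job: after deleting identity factors, $\rho(c_r)=0$ and $c_r\neq 1$ force $c_r=(m,0)$ with $m>0$, which is exactly the paper's argument, so the closed set $S$ is valid but unnecessary. For sufficiency, your $D$-argument, taken with $x=(m,0)$, $(a,0)=x^{j}$ and $(a+1,0)=y\,x^{j}$ (where $y=(N+1,N)$ is the Bezout element and $jm\ge N$), produces $(1,0)=x^{-j}yx^{j}$, which is just the inverse of the paper's identity $(0,1)=(0,m)^{N}y^{-1}(m,0)^{N}$.

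One correction: your Step~2 is miscomputed. For $b=(p,q)$ and $jm\ge p$ one gets $x^{-j}bx^{j}=(\delta(b),0)$ or $(0,-\delta(b))$; it is $x^{j}bx^{-j}=(jm+p,jm+q)$ that lies at a high level. This collapse is precisely why the ``main obstacle'' you describe is not one, since $x^{-j}(N+1,N)x^{j}=(1,0)$ as soon as $jm\ge N+1$.
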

\begin{proof}
	($\Rightarrow$): If $\Inv\langle B\rangle=\B$ then there exist elements $x_1,\dotsc,x_\ell\in \swfi{B}$ such that 
	\begin{equation}\label{eqn:product_for_(1,0)}
		x_1\cdots x_\ell = (1,0).
	\end{equation}
	Without loss of generality, we can assume that each $x_i\ne 1$. For all $b,c\in\B$ we have
	\begin{equation}
		\rho(bc) \ = \ \rho(c) - \lambda(c) + \max\big(\rho(b),\lambda(c)\big) \ \ge \ \rho(c) - \lambda(c) + \lambda(c) \ = \ \rho(c);
	\end{equation}
	in particular, we have $0 = \rho(1,0) = \rho(x_1\cdots x_\ell) \ge \rho(x_\ell)$ and thus $\rho(x_\ell)=0$. But $\lambda(x_\ell)\ne 0$ since $x_\ell\ne 1$, so $x_\ell=(m,0)$ for some $m>0$. Therefore $x_\ell\in B$ or $x_\ell^{-1}\in B$ satisfies~\eqref{thm:gen_set_B_i}. 
	
	For (ii), note that each $\delta(x_i)=\pm\delta(b)$ for some $b\in B$. By \eqref{eqn:product_for_(1,0)} this implies
	\begin{equation}
		1 \ = \ \delta(1,0) \ = \ \delta(x_1\cdots x_l) \ = \ \delta(x_1) + \cdots + \delta(x_\ell) \ = \ m_1\delta(b_1) + \cdots + m_k\delta(b_k)
	\end{equation}
	for some $m_1,\dotsc,m_k\in\Z$ and thus $\gcd\!\big(\delta(b_1),\dotsc,\delta(b_k)\big)=1$.
	
	($\Leftarrow$): It suffices to show $(0,1)\in \Inv\langle B\rangle$. By \eqref{thm:gen_set_B_ii}, there exist $m_1,\dotsc,m_k\in\Z$ such that
	\begin{equation}
		m_1\delta(b_1) + \cdots + m_k\delta(b_k) = 1,
	\end{equation}
	so the element $x=b_1^{m_1}\cdots b_k^{m_k}\in\Inv\langle B\rangle$ is such that $\delta(x)=1$. That is, $x = (n+1,n)$ for some $n\in\N$. By \eqref{thm:gen_set_B_i}, there exists $m\ge1$ such that $(0,m)\in B$ or $(m,0)\in B$, implying $(0,m)\in\Inv\langle B\rangle$. Hence
	\begin{equation}
		(0,nm+1) \ = \ (0,nm)(n,n+1) \ = \ (0,m)^n x^{-1} \ \in \ \Inv\langle B\rangle.
	\end{equation}
	Therefore
	\[
	(0,1) \ = \ (0,nm+1)(nm,0) \ = \ (0,nm+1)(0,m)^{-n}  \ \in \ \Inv\langle B\rangle. \qedhere
	\]
\end{proof}

 \section{\NP-hardness}\label{sec:NPhard}

Let 
\Epi denote the 
decision problem: on input a finite inverse monoid presentation $\Inv\pres{X}{R}$ for an inverse monoid $\I$, decide whether there exists an epimorphism $\I\rightarrow\B$.

We prove that the problem \Epi is \NP-hard by showing that the \NP-hard problem 3-SAT is polynomial time reducible to \Epi. This is done by constructing, from an arbitrary 3-CNF formula $\Phi$, an inverse monoid presentation $\cP(\Phi)$ that admits an epimorphism onto $\B$ if and only if $\Phi$ is satisfiable. This technique 
follows 
recent work \cite{elderVAbel} studying the complexity of the epimorphism problem for certain classes of groups. 
The form of presentation $\cP(\Phi)$ is inspired by work of Lopatin and  Rybalov on hardness of testing the feasibility of equations (with constants) over $\B$ \cite{lopatin2025bicyclicequations}.

A 3-CNF formula is an open formula $\Phi(\Theta)$ in propositional logic over a set of free variables $\Theta=\{\theta_1,\dotsc,\theta_M\}$ with the form
\begin{equation}\label{eqn:3CNF_formula}
	\Phi(\Theta) = \bigwedge_{i=1}^{N} \big(\alpha_{i,1}\lor\alpha_{i,2}\lor\alpha_{i,3}\big) \quad\text{where each}\quad \alpha_{i,j}=\theta_k \ \ \OR \ \ \neg\theta_k \quad\FORSOME\ 1\le k\le M.
\end{equation}
A 3-CNF formula $\Phi(\Theta)$ is said to be satisfiable if there exists an assignment ${\sigma:\Theta\rightarrow\{0,1\}}$ (where $0$, $1$ represent logical FALSE and logical TRUE, respectively) such that ${\Phi\big(\Theta\sigma\big)=1}$. For notational convenience we write
\begin{equation}
	\alpha_{i,j}\sigma = \left\{\begin{array}{ll}
		\theta_k\sigma & \text{if $\alpha_{i,j}=\theta_k$} \\
		\neg(\theta_k\sigma) & \text{if $\alpha_{i,j}=\neg\theta_k$,}
	\end{array}\right.
\end{equation}
and note that
\begin{equation}\label{eqn:sat_assignment}
	\Phi\big(\Theta\sigma\big)=1 \quad\iff\quad \text{for all $1\le i\le j$ there exists $1\le j\le 3$ such that $\alpha_{i,j}\sigma=1$.}
\end{equation}
The decision problem 3-SAT is as follows: on input a 3-CNF formula, decide if it is satisfiable. It is well-known that this problem is \NP-complete and thus \NP-hard.

Given a 3-CNF formula $\Phi(\Theta)$ as in \eqref{eqn:3CNF_formula}, define the alphabet
\begin{equation}
	\X(\Phi) = \{x_1,\dotsc,x_M\}\cup\{y_1,\dotsc,y_M\}\cup\{u_1,\dotsc,u_N\}\cup\{s\}.
\end{equation}
Define $\R(\Phi)$ as the following set of relations on $(\swfi{\X(\Phi)})^*$ (where we write $u=v$ to indicate $(u,v)\in\R(\Phi)$):
\begin{align}
		\label{R1}\tag{R1} x_ix_i^{-1} = y_iy_i^{-1}  &= u_ju_j^{-1}  = ss^{-1}  = 1, \\
		\label{R2}\tag{R2} x_iy_i &= s, \\
		\label{R3}\tag{R3} (x_i^{-1}x_i)(s^{-1}s)(y_i^{-1}y_i) &= (x_i^{-1}x_i)(y_i^{-1}y_i), \\
		\label{R4}\tag{R4} t_{j,1}t_{j,2}t_{j,3} &= u_js \qquad\WHERE\qquad t_{j,k} = \left\{\begin{array}{ll}
			x_i & \text{if $\alpha_{j,k}=\theta_i$} \\
			y_i & \text{if $\alpha_{j,k}=\neg\theta_i$,}
		\end{array}\right.
\end{align}
for all $1\le i\le M$ and $1\le j\le N$. Define $\cP(\Phi)=\Inv\pres{\X(\Phi)}{\R(\Phi)}$.

\begin{lemma}\label{thm:3-SAT_iff_Epi(I,B)}
	A 3-CNF formula $\Phi(\Theta)$ is satisfiable iff there exists an epimorphism ${\cP(\Phi)\rightarrow\B}$.
\end{lemma}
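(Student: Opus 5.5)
We will use \cref{thm:von-dyck} throughout. It lets us replace epimorphisms $\cP(\Phi)\to\B$ by assignments $\sigma:\X(\Phi)\to\B$ that solve $\R(\Phi)$ and whose image generates $\B$. Generation is then tested by \cref{thm:gen_set_B}. The plan is to first pin down what an arbitrary solution of $\R(\Phi)$ over $\B$ looks like, and then read off both directions from that description.

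\textbf{Step 1 (R1).} Take any solution $\sigma$ and write $z\sigma=(m,n)$ for a generator $z$. By \eqref{eqn:idempotent_B}, $(z\sigma)(z\sigma)^{-1}=(m,m)$, and this equals $1=(0,0)$ only when $m=0$. Hence (R1) forces every generator to map to an element of the form $(0,n)$. Such elements multiply by $(0,a)(0,b)=(0,a+b)$, and their idempotents are $(0,a)^{-1}(0,a)=(a,a)$. Any product of idempotents $(a,a)(b,b)$ equals $(\max(a,b),\max(a,b))$. Write $x_i\sigma=(0,a_i)$, $y_i\sigma=(0,b_i)$, $u_j\sigma=(0,c_j)$ and $s\sigma=(0,e)$.

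\textbf{Step 2 (the remaining relations).} In these terms:
\begin{itemize}
\item (R2) becomes $a_i+b_i=e$;
\item (R3) becomes $\max(a_i,e,b_i)=\max(a_i,b_i)$, i.e.\ $e\le\max(a_i,b_i)$;
\item (R4) becomes $\tau_{j,1}+\tau_{j,2}+\tau_{j,3}=c_j+e$, where $\tau_{j,k}\in\{a_i,b_i\}$ is the exponent of $t_{j,k}\sigma$. Since $c_j\ge0$, this says the three summands total at least $e$.
\end{itemize}
Because $a_i,b_i\ge0$, the conditions $a_i+b_i=e$ and $e\le\max(a_i,b_i)$ together force $\{a_i,b_i\}=\{0,e\}$. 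So a solution consists of a choice, for each $i$, of which of $x_i,y_i$ maps to $(0,e)$ and which maps to $1$, subject to the clause inequalities. This decoding of (R3) is the crux of the reduction: it is where the idempotent relation encodes the Boolean choice. It is, however, only a short computation with the max formula.

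\textbf{Step 3 ($\Leftarrow$).} Suppose there is an epimorphism. By \cref{thm:gen_set_B}(i) some generator maps to $(0,m)$ with $m>0$. If $e=0$, then every $a_i,b_i$ is $0$, and then $c_j=0$ by (R4), contradicting this. So $e>0$. Define $\theta_i\mapsto 1$ iff $a_i=e$ (equivalently $b_i=0$). A literal is then true exactly when its $\tau$-value is $e$ rather than $0$. The inequality $\tau_{j,1}+\tau_{j,2}+\tau_{j,3}\ge e$ therefore says each clause contains a true literal, so $\Phi$ is satisfiable. Condition (ii) of \cref{thm:gen_set_B} is not even needed here; it would additionally give $e=1$.

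\textbf{Step 4 ($\Rightarrow$).} Given a satisfying assignment, set $e=1$. Put $x_i\mapsto(0,1)$ and $y_i\mapsto 1$ if $\theta_i$ is true, and the reverse otherwise. Let $s\mapsto(0,1)$ and $u_j\mapsto(0,c_j)$, where $c_j$ is the number of true literals in clause $j$ minus one, so $c_j\in\{0,1,2\}$. By Step 2 all of (R1)--(R4) hold. Generation follows from \cref{thm:gen_set_B}: $s\sigma=(0,1)$ supplies (i), and $\delta(s\sigma)=-1$ gives $\gcd=1$ for (ii). Then \cref{thm:von-dyck}(i) yields an epimorphism.
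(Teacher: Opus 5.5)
Your proof is correct and takes essentially the same route as the paper: (R1) forces every generator into the form $(0,n)$, (R2) and (R3) together force $\{a_i,b_i\}=\{0,e\}$, (R4) encodes the clause condition, and your forward direction uses the same witness with $s\mapsto(0,1)$. The only difference is in ($\Leftarrow$): you use condition (i) of \cref{thm:gen_set_B} to get $e>0$, whereas the paper uses condition (ii) to get $e=1$, and your version shows slightly more, namely that any nontrivial morphism $\cP(\Phi)\to\B$ already yields a satisfying assignment.
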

\begin{proof}
	($\Rightarrow$): Suppose there exists a satisfying assignment $\sigma:\Theta\rightarrow\{0,1\}$. Treating logical $0$ and $1$ as integers, define $\tau:\X(\Phi)\rightarrow\B$ by 
	\begin{equation}
		\setlength{\arraycolsep}{4pt}
		\begin{array}{rclcrcl}
			s\tau & = & (0,1), & \qquad & y_i\tau & = & \big(0,\neg(\theta_i\sigma)\big), \quad\AND \\[2pt]
			x_i\tau & = & (0,\theta_i\sigma), & & u_j\tau & = & (0,\alpha_{j,1}\sigma+\alpha_{j,2}\sigma+\alpha_{j,3}\sigma-1),
		\end{array}
	\end{equation}
	for all $1\le i\le M$ and $1\le j\le N$. For brevity, we will write $\wh{z}=z\tau$ for $z\in \X(\Phi)$. We have that $\X(\Phi)\tau$ generates $\B$ since $(0,1)=\wh{s}\in\X(\Phi)\tau$, so we need only show that $\tau$ is a solution to $\R(\Phi)$ over $\B$ for the result to follow from \cref{thm:von-dyck}(i). 
	
	For \eqref{R1}, observe that $\alpha_{i,1}\sigma+\alpha_{i,2}\sigma+\alpha_{i,3}\sigma-1\ge0$ for all $1\le i\le M$ by \eqref{eqn:sat_assignment}. Therefore for all $z\in\X(\Phi)$ we have $\wh{z}=(0,n)$ for some $n\in\N$ and $\tau$ is a solution to \eqref{R1}. For each $1\le i\le M$, either $\wh{x}_i=(0,1)$ and $\wh{y}_i=(0,0)$, or $\wh{x}_i=(0,0)$ and $\wh{y}_i=(0,1)$. This implies $\tau$ is a solution to \eqref{R2}. It also implies that either
	\begin{equation}
		(\wh{x}_i^{-1}\wh{x}_i)(\wh{s}^{-1}\wh{s}) = (1,1)(1,1) = (1,1) = \wh{x}_i^{-1}\wh{x}_i \quad\text{or similarly}\quad (\wh{s}^{-1}\wh{s})(\wh{y}_i^{-1}\wh{y}_i) = \wh{y}_i^{-1}\wh{y}_i,
	\end{equation} 
	and hence $\tau$ is a solution to \eqref{R3}. Finally, for equation \eqref{R4}, note that
	\begin{equation}
		\wh{t}_{i,j} = \left\{\begin{array}{ll}
			(0,\theta_k\sigma) & \text{if $\alpha_{i,j}=\theta_k$} \\
			\big(0,\neg(\theta_k\sigma)\big) & \text{if $\alpha_{i,j}=\neg\theta_k$.}
		\end{array}\right.
	\end{equation}
	In either case, we have $\wh{t}_{i,j}=(0,\alpha_{i,j}\sigma)$ and thus for all $1\le i\le N$
	\begin{equation}
		 \wh{u}_i\wh{s} \ = \ (0,\alpha_{j,1}\sigma+\alpha_{j,2}\sigma+\alpha_{j,3}\sigma-1)(0,1) \ = \ (0,\alpha_{j,1}\sigma)(0,\alpha_{j,2}\sigma)(0,\alpha_{j,3}\sigma) \ = \ \wh{t}_{i,1}\wh{t}_{i,2}\wh{t}_{i,3}.
	\end{equation}
	
	($\Leftarrow$): Suppose $\phi:\cP(\Phi)\rightarrow\B$ is an epimorphism. Define $\tau:\X(\Phi)\rightarrow\B:z\mapsto[z]\phi$ where we again write $\wh{z}=z\tau$ for any $z\in\X(\Phi)$. We will show that $\sigma:\Theta\rightarrow\{0,1\}:\theta_i\mapsto\rho(\wh{x}_i)$ is a satisfying assignment for $\Phi(\Theta)$ by first identifying the possible maps for $\tau$.
	
	By \cref{thm:von-dyck}(ii) $\tau$ is a solution to $\X(\Phi)$ over $\B$. By \eqref{eqn:idempotent_B}, if $(m,n)(m,n)^{-1}=1$ then $m=0$. Hence equation \eqref{R1} implies that for $1\le i\le M$ and $1\le j\le N$ we have
	\begin{equation}
		\wh{s}=(0,n), \qquad \wh{x}_i=(0,p_i), \qquad \wh{y}_i=(0,q_i), \qquad\AND\qquad \wh{u}_j=(0,m_j)
	\end{equation}
	for some $n,p_i,q_i,m_i\in\N$. Equation \eqref{R2} implies that $p_i+q_i=n$ for all $1\le i\le M$. It follows from the order on~$E(\B)$ and equation \eqref{R3} that
	\begin{equation}
		\setlength{\arraycolsep}{4pt}
		\begin{array}{rcl}
			(p_i+q_i,p_i+q_i) \ = \ \wh{s}^{-1}\wh{s} \ = \ (\wh{x}_i^{-1}\wh{x}_i)(\wh{s}^{-1}\wh{s})(\wh{y}_i^{-1}\wh{y}_i) & = & (\wh{x}_i^{-1}\wh{x}_i)(\wh{y}_i^{-1}\wh{y}_i) \\
			& = & \big({\max(p_i,q_i)},\max(p_i,q_i)\big).
		\end{array}
	\end{equation}
	Therefore either $(p_i,q_i)=(0,n)$ or $(p_i,q_i)=(n,0)$. Equation \eqref{R4} then implies that there exists $0\le \ell\le 3$ such that $(0,\ell n) = \wh{t}_{j,1}\wh{t}_{j,2}\wh{t}_{j,3} = \wh{u}_j\wh{s} = (0,m_j+n)$ since each $\wh{t}_{j,k}=(0,p_i)$ or $(0,q_i)$ for some $1\le i\le M$. That is, each $m_j=r_jn$ for some~$0\le r_j\le 2$. We have shown that for all $z\in\X(\Phi)$ we have $\wh{z}=(0,kn)$ for some $k\in\N$. But $\phi$ is surjective, so $\X(\Phi)\tau$ generates $\B$ by \cref{thm:von-dyck}(ii) and $n=1$ by \cref{thm:gen_set_B}(ii). In summary, we have $\wh{s}=(0,1)$,
	\begin{equation}
		 (\wh{x}_i,\wh{y}_i)=\big((0,1),(0,0)\big) \ \ \OR \ \ \big((0,0),(0,1)\big) \qquad\FORALL\ 1\le i\le M,
	\end{equation}
	and $\wh{u}_j=(0,r_j)$ where $0\le r_j\le 2$ for all $1\le j\le N$.
	
	We can now show that $\sigma$ is a satisfying assignment for $\Phi(\Theta)$. Let $1\le j\le N$. By \eqref{R4}
	\begin{equation}
		\wh{t}_{j,1}\wh{t}_{j,2}\wh{t}_{j,3} = \wh{u}_j\wh{s} = (0,r_j+1) \qquad\implies\qquad \wh{t}_{j,k}=(0,1) \ \ \FORSOME \ \ 1\le k\le 3.
	\end{equation}
	If $t_{j,k}=x_i$ for some $1\le i\le M$ then $\theta_i\sigma=\rho(\wh{x}_i)=1$ and $\alpha_{j,k}\sigma=\theta_i\sigma=1$. If instead $t_{j,k}=y_i$ for some $1\le i\le M$ then $\theta_i\sigma=\rho(\wh{x}_i)=0$ and $\alpha_{j,k}\sigma=\neg(\theta_k\sigma)=1$, as required.
\end{proof}

The presentation $\cP(\Phi)$ has can be constructed in time linear in $\|\Phi\|$, so we have proved:

\begin{proposition}\label{thm:epi_is_NP-hard}
	The decision problem \Epi is \NP-hard.
\end{proposition}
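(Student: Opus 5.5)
The plan is to reduce 3-SAT to \Epi in polynomial time via the map $\Phi\mapsto\cP(\Phi)$, and then to conclude from the \NP-hardness of 3-SAT. Correctness of the reduction is exactly \cref{thm:3-SAT_iff_Epi(I,B)}: $\Phi(\Theta)$ is satisfiable if and only if there is an epimorphism $\cP(\Phi)\rightarrow\B$. So $\Phi$ is a yes-instance of 3-SAT precisely when $\cP(\Phi)$ is a yes-instance of \Epi. The only remaining task is to check that $\cP(\Phi)$ can be computed from $\Phi$ in time polynomial (indeed linear) in $\|\Phi\|$.

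For the size bound, I count generators and relations directly. The alphabet $\X(\Phi)$ has $2M+N+1$ letters. The relations group as follows.
\begin{itemize}
\item \eqref{R1} contributes $2M+N+1$ relations, each of total length $3$.
\item \eqref{R2} contributes $M$ relations, each of total length $3$.
\item \eqref{R3} contributes $M$ relations, each of total length $10$.
\item \eqref{R4} contributes $N$ relations, each of total length $5$.
\end{itemize}
Hence $\|\cP(\Phi)\|=O(M+N)$. We may assume every variable $\theta_i$ occurs in $\Phi$, since otherwise it can be discarded; then $M\le 3N$, and $M+N\le\|\Phi\|$ up to a constant. Each relation is written down by a single scan. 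For \eqref{R4}, the $j$-th relation is read off clause $j$ by replacing each literal $\theta_i$ with $x_i$ and each literal $\neg\theta_i$ with $y_i$. Writing indices in binary adds at most a logarithmic factor, so the whole construction runs in polynomial time.

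Combining these two facts, any polynomial-time algorithm for \Epi would decide 3-SAT in polynomial time. Since 3-SAT is \NP-complete, \Epi is \NP-hard. I expect no real obstacle here: all the substantive work, namely forcing $\wh{s}=(0,1)$ through \cref{thm:gen_set_B} and extracting a satisfying assignment, was already carried out in \cref{thm:3-SAT_iff_Epi(I,B)}. The only point needing care is that the reduction is uniform and its output size is bounded by a fixed polynomial in $\|\Phi\|$, and the count above settles that.
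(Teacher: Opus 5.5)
Your proposal is correct and is the paper's argument: reduce 3-SAT via $\Phi\mapsto\cP(\Phi)$, take correctness from \cref{thm:3-SAT_iff_Epi(I,B)}, and note that $\cP(\Phi)$ has size linear in $\|\Phi\|$ and can be written down in a single scan. Your explicit count of relations, and your remark that unused variables may be discarded so that $M\le 3N$, just spell out what the paper states in one line (under the paper's size convention the relations in \eqref{R1} have total length $2$, not $3$, because the right-hand side is the empty word, but this does not affect the bound).
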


 \section{Containment in \NP}\label{sec:inNP}

 For a mapping $\phi:\Inv\pres{X}{R}\rightarrow\B$ defined on the generators $X$ to be an epimorphism, it must be  a solution to the system of inverse monoid equations $R$, and the image of the generators must be a generating set for $\B$. 
The binary operation on $\B$ can be realised 
in terms of linear arithmetic on integers, so equations over $\B$ have a natural interpretation in Presburger arithmetic. However, a direct transformation of the equations in $R$ into PA would yield an exponential increase in size. This is a consequence of the branching implied by the $\max$ function in the operation on $\B$. Instead, we use the technique of \textit{triangulation}, that is, splitting equations involving four or more variables into equations of the form $x=y$ or $x=yz$ by introducing new variables. 
This method has been used to show that testing the feasibility of equations (with constants) over $\B$ is in \NP \cite{lopatin2025bicyclicequations}, and is standard in solving equations over monoids and groups. Once we have reduced testing a solution for $R$ to a PA formula, we add additional constraints based on \cref{thm:gen_set_B} to ensure any resulting morphism is surjective. This reduction to a \EPAGRP formula yields the \NP bound.

Let $R$ be a system of inverse monoid equations with variable set $X$. Recall that a mapping $\sigma:X\rightarrow\B$ is a solution to $R$ over $\B$ if the morphism $\ol\sigma$ induced by $\sigma':\swfi{X}\rightarrow\B$ where $x\sigma'=x\sigma$ and $x^{-1}\sigma'=(x\sigma)^{-1}$ satisfies $u\ol\sigma=v\ol\sigma$ for all $(u,v)\in R$. However, a direct transformation of $R$ into a PA formula would sever the implied connection between the variables $x$ and $x^{-1}$. To maintain this connection, we transform $R$ into a new system, without formal inverses, over a variable set $X\cup\wh{X}$ and encode the inverse relationship between $x$ and~$\wh{x}$ in the equations themselves.

Define a new variable set ${\wh{X}=\{\wh{x}:x\in X\}}$ disjoint from and in one-to-one correspondence with $X$. Let $\theta:(\swfi{X})^*\rightarrow \big(X\cup\wh{X}\big)^*$ be the morphism induced by the mapping
\begin{equation}
	\swfi{X}\rightarrow \big(X\cup\wh{X}\big)^*:\left\{\begin{array}{l}
		x\mapsto x \\
		x^{-1} \mapsto \wh{x}.
	\end{array}\right.
\end{equation}
Define the following system of equations with variable set $X\cup\wh{X}$:
\begin{equation}
	\Theta(R) = \big\{(x\wh{x}x,x),(\wh{x}x\wh{x},\wh{x}):x\in X\big\}\cup\big\{(u\theta,v\theta):(u,v)\in R\big\}.
\end{equation} 
In essence, the following lemma states that a solution for $R$ can be obtained from a solution to $\Theta(R)$ in the obvious way, and vice versa.

\begin{lemma}\label{lem:rem_inv_ele_in_eqns}
	Let $\I$ be an inverse monoid and $R\subseteq (\swfi{X})^*\times(\swfi{X})^*$. If $\tau:X\rightarrow \I$ is a solution to~$R$ then there exists a solution $\sigma:X\cup\wh{X}\rightarrow\I$ to $\Theta(R)$ such that $x\sigma=x\tau$ for all $x\in X$. Conversely, if $\sigma$ is a solution to~$\Theta(R)$ then~$\sigma|_X$ is a solution to $R$.
\end{lemma}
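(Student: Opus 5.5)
The plan is to compare two morphisms out of $(\swfi{X})^*$ directly. One is the morphism induced by a map $X\to\I$. The other is the composite of $\theta$ with the morphism $\ol\sigma:(X\cup\wh{X})^*\to\I$ induced by a map $\sigma:X\cup\wh{X}\to\I$. I will show that these two morphisms agree exactly when $\wh{x}\sigma=(x\sigma)^{-1}$ for every $x\in X$. Both directions of \cref{lem:rem_inv_ele_in_eqns} then follow from this agreement together with the uniqueness of inverses in an inverse monoid.

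For the forward direction, given a solution $\tau:X\to\I$ to $R$, I will define $\sigma$ by $x\sigma=x\tau$ and $\wh{x}\sigma=(x\tau)^{-1}$.
\begin{enumerate}
\item The equations $(x\wh{x}x,x)$ and $(\wh{x}x\wh{x},\wh{x})$ hold in $\I$. This is because $aa^{-1}a=a$ is an axiom in \eqref{eqn:inv_mon_def}, and $a^{-1}aa^{-1}=a^{-1}$ was noted right after it.
\item The morphisms $\theta\ol\sigma$ and $\ol\tau$ agree on every letter of $\swfi{X}$. Indeed, $x\theta\ol\sigma=x\tau$, and $x^{-1}\theta\ol\sigma=\wh{x}\sigma=(x\tau)^{-1}=x^{-1}\tau'$.
\item Since both are monoid morphisms out of the free monoid, they are therefore equal. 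Hence for $(u,v)\in R$ we get $(u\theta)\ol\sigma=u\ol\tau=v\ol\tau=(v\theta)\ol\sigma$, so $\sigma$ solves $\Theta(R)$.
\end{enumerate}

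For the converse, let $\sigma$ be a solution to $\Theta(R)$ and put $a=x\sigma$ and $b=\wh{x}\sigma$. The first two families of equations in $\Theta(R)$ say precisely that $aba=a$ and $bab=b$, so $b$ is a generalised inverse of $a$. Since inverses in an inverse monoid are unique, as recalled after \eqref{eqn:inv_mon_def}, we get $b=a^{-1}$, that is, $\wh{x}\sigma=(x\sigma)^{-1}$.

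This is exactly the hypothesis used in the letter-by-letter comparison of the forward direction, now applied with $\tau=\sigma|_X$. It gives $\theta\ol\sigma=\ol{\sigma|_X}$ on $(\swfi{X})^*$. Then for $(u,v)\in R$ we have $u\ol{\sigma|_X}=(u\theta)\ol\sigma=(v\theta)\ol\sigma=v\ol{\sigma|_X}$, as required.

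The only step that is not purely formal is the use of uniqueness of inverses in the converse. This is where the auxiliary equations $(x\wh{x}x,x)$ and $(\wh{x}x\wh{x},\wh{x})$ are needed. If the paper's remark is not considered enough, I would prove uniqueness directly: if $b$ and $c$ are both inverses of $a$, then using the commutation of idempotents,
\[
b=bab=b(aca)b=(ba)(ca)b=(ca)(ba)b=cab=\cdots=c .
\]
This is the standard Wagner–Preston argument, where the idempotents $ab,ac,ba,ca$ lie in $E(\I)$ and commute. The rest of the proof is routine induction on word length, which I would not spell out.
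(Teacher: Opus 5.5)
Your proposal is correct and follows essentially the same route as the paper. In the forward direction you set $\wh{x}\sigma=(x\tau)^{-1}$; in the converse you use the equations $(x\wh{x}x,x)$ and $(\wh{x}x\wh{x},\wh{x})$ with uniqueness of inverses to get $\wh{x}\sigma=(x\sigma)^{-1}$, then compare $\theta\ol\sigma$ with the morphism induced by $\sigma|_X$. Your explicit Wagner--Preston check of uniqueness of inverses is correct and supplies the justification the paper only cites.
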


\begin{proof}
	Suppose $\tau:X\rightarrow \I$ is a solution to $R$. Define
	\begin{equation}
		\sigma:X\cup\wh{X}\rightarrow\I:\left\{\begin{array}{l}
			x\mapsto x\tau \\
			\wh{x}\mapsto (x\tau)^{-1} \text{ ($=x^{-1}\tau$ since $\tau$ is a solution to $R$),}
		\end{array}\right.
	\end{equation}
	and note that $\theta\sigma=\tau$ for all $x\in\swfi{X}$. It follows that the morphism $\ol\sigma:\big(X\cup\wh{X}\big)^*\rightarrow\I$ induced by $\sigma$ satisfies $(u\theta)\ol\sigma=u\ol\tau=v\ol\tau=(v\theta)\ol\sigma$ for all $(u,v)\in R$. We also have
	\begin{equation}
		(x\wh{x}x)\ol\sigma = (x\tau)(x\tau)^{-1}(x\tau) = (x\tau) = x\ol\sigma
	\end{equation}
	and similarly $(\wh{x}x\wh{x})\ol\sigma=\wh{x}\ol\sigma$. Therefore $\sigma$ is a solution to $\Theta(R)$.
	
	For the converse, suppose $\sigma:X\cup\wh{X}\rightarrow\I$ is a solution to $\Theta(R)$. Then
	\begin{equation}
		(x\sigma)(\wh{x}\sigma)(x\sigma) = (x\wh{x}x)\ol\sigma = x\ol\sigma = x\sigma \qquad\AND\qquad (\wh{x}\sigma)(x\sigma)(\wh{x}\sigma) = (\wh{x}x\wh{x})\ol\sigma = \wh{x}\ol\sigma = \wh{x}\sigma
	\end{equation}
	imply that $\wh{x}\sigma=(x\sigma)^{-1}$. Let $\tau=\sigma|_X$. Then the morphism $\ol\tau$ induced by
	\begin{equation}
		\tau':\swfi{X}\rightarrow\I:\left\{\begin{array}{lclclcl}
			x &\mapsto& x\tau & = & x\sigma \\
			x^{-1} &\mapsto& (x\tau)^{-1} & = & (x\sigma)^{-1} &= &\wh{x}\sigma
		\end{array}\right.
	\end{equation}
	satisfies $u\ol\tau = (u\theta)\ol\sigma = (v\theta)\ol\sigma = v\ol\tau$ for all $(u,v)\in R$, as required.
\end{proof}

We now formalise the process of triangulating a system of equations. Let $R$ be a system of inverse monoid equations with variable set $X$. Observe that any mapping $\sigma:\swfi{X}\rightarrow\I$ is a solution to $(1,1)\in(\swfi{X})^*\times(\swfi{X})^*$, and if $\sigma$ is a solution to $(u,v)\in(\swfi{X})^*\times(\swfi{X})^*$ then $\sigma$ is a solution to $(v,u)$. So we can assume without loss of generality that $(1,v)\notin R$ for any $v\in(\swfi{X})^*$. For each $\eta=(u,v)\in R$ we have $u=u_1\cdots u_j$ and $v=v_1\cdots v_k$ for some $u_i,v_i\in\swfi{X}$. Define
\begin{equation}
	R_\eta = \{(y_1,z_1)\}\cup U_\eta\cup V_\eta \qquad\WHERE\qquad U_\eta = \{(y_i,u_iy_{i+1}):1\le i< j\}\cup\{(y_j,u_j)\}
\end{equation}
and
\begin{equation}
	V_\eta = \left\{\begin{array}{ll}
		\{(z_1,1)\} & \text{if $v=1$} \\
		\{(z_i,v_iz_{i+1}):1\le i< k\}\cup\{(z_k,v_k)\} & \text{otherwise.}
	\end{array}\right.
\end{equation}
Here, $Y_\eta=\{y_1,\dotsc,y_j\}$ and $Z_\eta=\{z_1,\dotsc,z_k\}$ are new sets of variables. Define the following system of equations
\begin{equation}
	\Xi(R) = \bigcup_{\eta\in R} R_\eta \quad\text{with variable set}\quad X\cup W_R \quad\WHERE\quad W_R=\bigcup_{\eta\in R}\left(Y_\eta\cup Z_\eta\right).
\end{equation}
Similar to \cref{lem:rem_inv_ele_in_eqns}, the following lemma describes the relationship between solutions for $R$ and $\Xi(R)$.

\begin{lemma}\label{lem:split_inv_eqns}
	Let $\I$ be an inverse monoid and $R\subseteq (\swfi{X})^*\times(\swfi{X})^*$. If $\tau:X\rightarrow \I$ is a solution to~$R$ then there exists a solution $\sigma:X\cup W_R\rightarrow\I$ to $\Xi(R)$ such that $x\sigma=x\tau$ for all $x\in X$. Conversely, if $\sigma$ is a solution to~$\Xi(R)$ then~$\sigma|_X$ is a solution to $R$.
\end{lemma}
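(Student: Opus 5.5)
The proof mirrors that of \cref{lem:rem_inv_ele_in_eqns}: extend $\tau$ to the new variables by assigning each one the value of the suffix it is meant to represent, and conversely collapse the chains of equations back up.

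Before starting, note that the right-hand sides in $\Xi(R)$ involve letters $u_i,v_i\in\swfi{X}$. A solution $\sigma:X\cup W_R\rightarrow\I$ is therefore understood, as in the definition of solutions, through the induced morphism $\ol\sigma$ with $x^{-1}\ol\sigma=(x\sigma)^{-1}$. The new variables $y_i,z_i$ occur only positively, so their inverses never appear.

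For the forward direction, let $\tau$ be a solution to $R$ and let $\ol\tau$ be the induced morphism. For each $\eta=(u,v)\in R$ with $u=u_1\cdots u_j$ and $v=v_1\cdots v_k$, define
\[y_i\sigma=(u_i\cdots u_j)\ol\tau \quad\FORALL\ i\in[j].\]
If $v\ne1$, define $z_i\sigma=(v_i\cdots v_k)\ol\tau$ for all $i\in[k]$. If $v=1$, define $z_1\sigma=1$. Put $x\sigma=x\tau$ for $x\in X$. The sets $Y_\eta,Z_\eta$ are fresh and pairwise disjoint across different $\eta$, so $\sigma$ is well defined.

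We then check each equation in $R_\eta$:
\begin{itemize}
\item $(y_i,u_iy_{i+1})$ holds because $(u_i\cdots u_j)\ol\tau=(u_i)\ol\tau\,(u_{i+1}\cdots u_j)\ol\tau$, and $u_i\ol\sigma=u_i\ol\tau$ since $\sigma$ agrees with $\tau$ on $X$.
\item $(y_j,u_j)$ holds by definition.
\item The equations in $V_\eta$ hold in the same way.
\item $(y_1,z_1)$ holds because $y_1\sigma=u\ol\tau=v\ol\tau=z_1\sigma$, as $\tau$ solves $\eta$.
\end{itemize}

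For the converse, let $\sigma$ solve $\Xi(R)$. We prove by downward induction on $i$, starting from $i=j$ and using the equations in $U_\eta$, that
\[y_i\sigma=(u_i\cdots u_j)\ol\sigma,\]
and hence $y_1\sigma=u\ol\sigma$. In the same way $z_1\sigma=v\ol\sigma$; when $v=1$ this reads $z_1\sigma=1=v\ol\sigma$. The equation $(y_1,z_1)$ then gives $u\ol\sigma=v\ol\sigma$. Since $u$ and $v$ involve only letters from $\swfi{X}$, we have $u\ol\sigma=u\,\ol{\sigma|_X}$ and likewise for $v$. So $\sigma|_X$ solves every $\eta\in R$.

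There is no real obstacle here. The only care needed is the bookkeeping just described: keeping the fresh variables disjoint across different $\eta$, treating the case $v=1$ separately, and making sure inverse letters on right-hand sides are interpreted via $\ol\sigma$ consistently with $\sigma|_X$. The standing assumption that $(1,v)\notin R$ guarantees $j\ge1$, so $y_1$ exists.
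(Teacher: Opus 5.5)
Your proof is correct and follows essentially the same argument as the paper. You assign each $y_i$ (resp.\ $z_i$) the value of the suffix $u_i\cdots u_j$ (resp.\ $v_i\cdots v_k$) under $\ol\tau$, check the three kinds of equations in $R_\eta$, and for the converse collapse the chain $y_j,\dots,y_1$ to get $y_1\ol\sigma=u\ol\tau$ (and likewise $z_1\ol\sigma=v\ol\tau$); your explicit handling of the case $v=1$ and of inverse letters via $\ol\sigma$ only spells out details the paper leaves implicit.
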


\begin{proof}
	Suppose $\tau:X\rightarrow\I$ is a solution to $R$. If $w\in W_R$ then $w=y_i\in Y_\eta$ or $w=z_i\in Z_\eta$ for some ${\eta=(u_1\cdots u_j,v_1\cdots v_k)\in R}$ and $1\le i\le j,k$. Define the mapping
	\begin{equation}
		\sigma:X\cup W_R\rightarrow\I:\left\{\begin{array}{ll}
			x\mapsto x\tau & \text{if $x\in X$} \\
			w\mapsto (u_iu_{i+1}\cdots u_j)\ol\tau & \text{if $w=y_i\in Y_\eta$} \\
			w\mapsto (v_iv_{i+1}\cdots v_k)\ol\tau & \text{if $w=z_i\in Z_\eta$.}
		\end{array}\right.
	\end{equation}
	If $\xi\in\Xi(R)$ then $\xi\in R_\eta$ for some $\eta=(u_1\cdots u_j,v_1\cdots v_k)\in R$. There are three cases. \medskip
	\begin{description}
		\item[Case 1] If $\xi=(y_j,u_j)\in U_\eta$ then $y_j\ol\sigma = u_j\ol\tau = u_j\ol\sigma$ since $u_j\in \swfi{X}$. A similar argument shows $\sigma$ is a solution to $\xi=(z_k,v_k)\in V_\eta$ or $\xi=(z_1,1)\in V_\eta$.
		\item[Case 2] If $\xi=(y_i,u_iy_{i+1})\in U_\eta$ for some $1\le i\le j$ then
		\begin{equation}
			y_i\ol\sigma \ = \ (u_iu_{i+1}\cdots u_j)\ol\tau \ = \ (u_i\ol\tau)\big((u_{i+1}\cdots u_j)\ol\tau\big) \ = \ (u_i\ol\sigma)(y_{i+1}\ol\sigma) \ = \ (u_iy_{i+1})\ol\sigma.
		\end{equation}
		A similar argument shows $z_i\ol\sigma=(v_iz_{i+1})\ol\sigma$ if $\xi=(z_i,v_iz_{i+1})\in U_\eta$ for some $1\le i\le k$.
		\item[Case 3] If $\xi=(y_1,z_1)$ then $y_1\ol\sigma=(u_1\dotsc u_j)\ol\tau=u\ol\tau=v\ol\tau=(v_1\dotsc v_k)\ol\tau=z_1\ol\sigma$ since $(u,v)\in R$ and $\tau$ is a solution to $R$.
	\end{description}
	
	Conversely, suppose that $\sigma:X\cup W_R\rightarrow \I$ is a solution to $\Xi(R)$. Let $\ol\tau:(\swfi{X})^*\rightarrow\I$ be the morphism induced by $\tau=\sigma|_X$. If $\eta=(u,v)=(u_1\cdots u_j,v_1\cdots v_k)\in R$ then
	\begin{equation}
		\begin{array}{rclll}
			u\ol\tau & = & (u_1\cdots u_{j-2}u_{j-1}u_j)\ol\tau \\
			& = & (u_1\ol\sigma)\cdots(u_{j-2}\ol\sigma)(u_{j-1}\ol\sigma)(u_j\ol\sigma) \\
			& = & (u_1\ol\sigma)\cdots(u_{j-2}\ol\sigma)(u_{j-1}\ol\sigma)(y_j\ol\sigma) & & \text{since $(u_j,y_j)\in\Xi(R)$}  \\
			& = & (u_1\ol\sigma)\cdots(u_{j-2}\ol\sigma)(y_{j-1}\ol\sigma) & & \text{since $(y_{j-1},u_{j-1}y_j)\in\Xi(R)$}  \\
			& = & \cdots \ \ = \ \ y_1\ol\sigma.
		\end{array}
	\end{equation}
	Similarly, $v\ol\tau = z_1\ol\sigma$ and thus $u\ol\tau=v\ol\tau$ since $(y_1,z_1)\in\Xi(R)$.
\end{proof}

We now describe how to convert the triangulated equations into a PA formula. Let $R$ be a system of inverse monoid equations with variable set $X$. For each $\eta\in\Xi(R)$ define the open formula
\begin{equation}
	\psi(\eta) = \left\{\begin{array}{ll}
		x_\lambda=0 \land x_\rho=0 & \text{if $\eta=(x,1)$} \\[1.5mm]
		x_\lambda=y_\lambda \land x_\rho=y_\rho & \text{if $\eta=(x,y)$} \\[1.5mm]
		\setlength{\arraycolsep}{0pt}
		\begin{array}{l}
			\big(y_\rho\ge z_\lambda \land x_\lambda=y_\lambda \land x_\rho=z_\rho-z_\lambda+y_\rho\big) \\[1pt]
			\quad {}\lor\big(\neg(y_\rho\ge z_\lambda) \land x_\lambda=y_\lambda-y_\rho+z_\lambda \land x_\rho=z_\rho\big)
		\end{array}
		& \text{if $\eta=(x,yz)$,}
	\end{array}\right.
\end{equation}
where $x,y,z\in X\cup W_R$ and $x_\lambda,x_\rho$, etc. are free variables. Define the formula
\begin{equation}
	\Psi(\Xi(R)) = \bigwedge_{v\in V}v\ge0\land \bigwedge_{\eta\in\Xi(R)} \psi(\eta) \quad\text{with variable set}\quad V=\{x_\lambda,x_\rho:x\in X\cup W_R\}.
\end{equation}
The following lemma shows that the solutions to the system $\Xi(\Theta(R))$ over $\B$ are in one-to-one correspondence with satisfying assignments for $\Psi(\Xi(\Theta(R)))$ over $\N$.

\begin{lemma}\label{thm:eqn_sol_iff_formula_satisfied}
	Let $R\subseteq (\swfi{X})^*\times(\swfi{X})^*$. If $\tau:X\rightarrow\B$ is a solution to $R$ then there exists a satisfying assignment $\varsigma:V\rightarrow\N$ for $\Psi(\Xi(\Theta(R)))$ such that $(x_\lambda\varsigma,x_\rho\varsigma) = x\tau$ for all $x\in X$. Conversely, if $\varsigma$ is a satisfying assignment for $\Psi(\Xi(\Theta(R)))$ then the mapping $\tau:X\rightarrow\B$ given by $x\mapsto(x_\lambda\varsigma,x_\rho\varsigma)$ is a solution to $R$ over $\B$.
\end{lemma}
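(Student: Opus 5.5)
The plan is to compose the two correspondences already established and then check, equation by equation, that the Presburger encoding $\psi$ captures the multiplication of $\B$ exactly. Write $S=\Xi(\Theta(R))$; its variable set is $X\cup\wh{X}\cup W_{\Theta(R)}$.

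\textbf{Forward direction.} Suppose $\tau:X\rightarrow\B$ solves $R$.
\begin{enumerate}
\item By \cref{lem:rem_inv_ele_in_eqns} there is a solution $\sigma_1$ to $\Theta(R)$ with $x\sigma_1=x\tau$ for all $x\in X$.
\item By \cref{lem:split_inv_eqns}, applied to the system $\Theta(R)$ over the variable set $X\cup\wh{X}$, there is a solution $\sigma$ to $S$ extending $\sigma_1$.
\item Define $\varsigma$ by $x_\lambda\varsigma=\lambda(x\sigma)$ and $x_\rho\varsigma=\rho(x\sigma)$. These values are nonnegative, so the conjuncts $v\ge0$ hold.
\item It remains to check that each $\psi(\eta)$ holds, which reduces to a single claim. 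For $p=(m,n)$, $q=(m',n')$ and $r=(m'',n'')$ in $\B$, we have $p=qr$ if and only if
\[
\bigl(n'\ge m''\land m=m'\land n=n''-m''+n'\bigr)\ \lor\ \bigl(n'<m''\land m=m'-n'+m''\land n=n''\bigr).
\]
This follows from the product formula $(m',n')(m'',n'')=\bigl(m'-n'+\max(n',m''),\,n''-m''+\max(n',m'')\bigr)$ by splitting on which argument attains the maximum.
\item The other two equation types are immediate. The equation $(x,1)$ says $x\sigma=(0,0)$, and $(x,y)$ is componentwise equality.
\end{enumerate}

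\textbf{Converse.} Suppose $\varsigma$ satisfies $\Psi(S)$.
\begin{enumerate}
\item The constraints $v\ge0$ guarantee that $\sigma:w\mapsto(w_\lambda\varsigma,w_\rho\varsigma)$ lands in $\N\times\N=\B$.
\item By the same claim, $\sigma$ solves every equation of $S$.
\item \cref{lem:split_inv_eqns} then shows that $\sigma$ restricted to $X\cup\wh{X}$ solves $\Theta(R)$.
\item \cref{lem:rem_inv_ele_in_eqns} then shows that $\sigma|_X=\tau$ solves $R$.
\end{enumerate}

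\textbf{Main obstacle.} The only step with real content is the claim relating multiplication in $\B$ to $\psi$. The rest is bookkeeping, with one point to watch. \cref{lem:split_inv_eqns} is stated for systems over $\swfi{X}$, but $\Theta(R)$ has no formal inverses. It is a system over the variable set $X\cup\wh{X}$, which can be viewed as a subset of $(\swfi{(X\cup\wh{X})})^*$. So the lemma applies verbatim with $X\cup\wh{X}$ in place of $X$.
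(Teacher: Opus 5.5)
Your proposal is correct and follows the paper's proof essentially step for step. Like the paper, you compose \cref{lem:rem_inv_ele_in_eqns} and \cref{lem:split_inv_eqns}, define $\varsigma$ through $\lambda$ and $\rho$, check the three equation types by splitting on which argument attains the $\max$ in the product of $\B$, and run the chain backwards for the converse. Your differences are presentational only: you package the case check as a single biconditional for $p=qr$, and you make explicit that \cref{lem:split_inv_eqns} applies to $\Theta(R)$ over the alphabet $X\cup\wh{X}$, a point the paper uses silently.
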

\begin{proof}
	Write $\bm{X}=\big(X\cup \wh{X}\big)\cup W_{\Theta(R)}$. Suppose $\tau$ is a solution to $R$ over $\B$. By \cref{lem:rem_inv_ele_in_eqns,lem:split_inv_eqns} there is a corresponding solution 
	\begin{equation}
		\sigma:\bm{X}\rightarrow \B \quad\text{satisfying}\quad x\sigma=x\tau \quad\text{to $\Xi(\Theta(R))$.}
	\end{equation}
	For brevity, we will write $\ol{x}=x\ol\sigma$ for $x\in \bm{X}$. Define the mapping
	\begin{equation}
		\varsigma:V\rightarrow\N:\left\{\begin{array}{ll}
			x_\lambda\mapsto \lambda(\ol{x}) \\
			x_\rho\mapsto \rho(\ol{x})
		\end{array}\right.
		\quad\FOR\quad x\in \bm{X} \quad\text{and write}\quad \wt{v}=v\varsigma \quad\FOR\quad v\in V
	\end{equation}
	Hence $(\wt{x}_\lambda,\wt{x}_\rho)=\ol{x}$ for all $x\in\bm{X}$. Each $\eta\in\Xi(\Theta(R))$ falls into one of three cases.\medskip
	\begin{description}
		\item[Case 1] If $\eta=(x,1)$ for some $x\in\bm{X}$ then $(\wt{x}_\lambda,\wt{x}_\rho) = \ol{x} = \ol{1} = (0,0)$.
		\item[Case 2] If $\eta=(x,y)$ for some $x,y\in\bm{X}$ then $(\wt{x}_\lambda,\wt{x}_\rho)= \ol{x} = \ol{y} = (\wt{y}_\lambda,\wt{y}_\rho)$.
		\item[Case 3] If $\eta=(x,yz)$ for some $x,y,z\in \bm{X}$ then
		\begin{equation}
			\setlength{\arraycolsep}{3pt}
			\begin{array}{rclll}
				(\wt{x}_\lambda,\wt{x}_\rho) & = & \ol{x} \ = \ \ol{y}\,\ol{z} \\[1mm]
				& = & \Big(\lambda(\ol{y})-\rho(\ol{y})+\max\big(\rho(\ol{y}), \lambda(\ol{z})\big),\ \rho(\ol{z})-\lambda(\ol{z})+\max\big(\rho(\ol{y}),\lambda(\ol{z})\big)\Big) \\[1.5mm]
				& = & \big(\wt{y}_\lambda-\wt{y}_\rho+\max(\wt{y}_\rho, \wt{z}_\lambda),\ \wt{z}_\rho-\wt{z}_\lambda+\max(\wt{y}_\rho,\wt{z}_\lambda)\big) \\[1.5mm]
				& = & \left\{\begin{array}{lll}
					(\wt{y}_\lambda,\ \wt{z}_\rho-\wt{z}_\lambda+\wt{y}_\rho) & & \text{if $\wt{y}_\rho\ge \wt{z}_\lambda$} \\[1mm]
					(\wt{y}_\lambda-\wt{y}_\rho+\wt{z}_\lambda,\ \wt{z}_\rho) & & \text{if $\wt{y}_\rho< \wt{z}_\lambda$.}
				\end{array}\right.
			\end{array}
		\end{equation}
	\end{description}
	In each case, the assignment $\varsigma$ satisfies $\psi(\eta)$. Clearly $v\ge0$ for all $v\in V$ since $\im(\varsigma)=\N$. Therefore $\varsigma$ satisfies $\Psi(\Xi(\Theta(R)))$.
	
	For the converse, suppose $\varsigma$ is a satisfying assignment for $\Psi(\Xi(\Theta(R)))$. Define the mapping
	\begin{equation}
		\sigma:\bm{X}\rightarrow \B:x\mapsto (\wt{x}_\lambda,\wt{x}_\rho) \quad\WHERE\quad \wt{v}=v\varsigma \quad\FOR\quad v\in V. 
	\end{equation}
    Note $\wt{v}\in\N$ for all $v\in V$ since $\varsigma$ is a solution to $\Psi(\Xi(\Theta(R)))$, so we indeed have $\im(\sigma)\subseteq\N\times\N$. As before, we write $\ol{x}=x\ol\sigma$ for $x\in\bm{X}$ and consider three cases for $\eta\in\Xi(\Theta(R))$. \medskip
	\begin{description}
		\item[Case 1] If $\eta=(x,1)$ then the truth of $\psi(\eta)$ implies $\ol{x}=(\wt{x}_\lambda,\wt{x}_\rho)=(0,0)=\ol{1}$.
		\item[Case 2] If $\eta=(x,y)$ then $\ol{x}=(\wt{x}_\lambda,\wt{x}_\rho)=(\wt{y}_\lambda,\wt{y}_\rho)=\ol{y}$.
		\item[Case 3] If $\eta=(x,yz)$ then $\ol{x} = \big(\wt{y}_\lambda-\wt{y}_\rho+\max(\wt{y}_\rho,\wt{z}_\lambda), \wt{z}_\rho-\wt{z}_\lambda+\max(\wt{y}_\rho,\wt{z}_\lambda)\big) = \ol{y}\,\ol{z}$.
	\end{description} \medskip
	Therefore $\sigma$ is a solution for $\Xi(\Theta(R))$ over $\B$. By \cref{lem:split_inv_eqns}, $\sigma|_{X\cup \wh{X}}$ is a solution to $\Theta(R)$, which by \cref{lem:rem_inv_ele_in_eqns} implies $\tau=\big(\sigma|_{X\cup \wh{X}}\big)\big|_X=\sigma|_{X}$ is a solution to $R$.
\end{proof}

Now that we have shown how to reduce a system of equations over $\B$ to a PA formula, all that remains is to encode, as a \EPAGRP formula, the necessary and sufficient conditions for the image of a morphism $\Inv\pres{X}{R}\rightarrow\B$ to equal $\B$. Given an alphabet $X$, define the formula
\begin{equation}
	\Gamma(X) = \bigvee_{x\in X} \big((x_\lambda=0\land x_\rho\ge1)\lor (x_\lambda\ge1\land x_\rho=0)\big) \quad\text{over}\quad \{x_\lambda,x_\rho:x\in X\}.
\end{equation}
Given a presentation $\Inv\pres{X}{R}$, define the \EPAGRP formula
\begin{equation}
	\Phi(X,R) = \Psi(\Xi(R))\land\Gamma(X)\land\gcd\{x_\lambda-x_\rho:x\in X\}=1
\end{equation}
over the variable set $V=\{x_\lambda,x_\rho: x\in\big(X\cup \wh{X}\big)\cup W_{\Theta(R)}\}$.

\begin{lemma}\label{thm:epi_iff_PA-GCD-formula}
	Let $\I=\Inv\pres{X}{R}$. There exists an epimorphism $\I\rightarrow\B$ if and only if there exists a satisfying assignment for the formula $\Phi(X,R)$.
\end{lemma}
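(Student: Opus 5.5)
Note first that $\Phi(X,R)$ as written contains $\Psi(\Xi(R))$, but the variable set is that of $\Xi(\Theta(R))$. I will read it as $\Psi(\Xi(\Theta(R)))$, which is what \cref{thm:eqn_sol_iff_formula_satisfied} handles. The plan is to chain \cref{thm:von-dyck}, \cref{thm:eqn_sol_iff_formula_satisfied} and \cref{thm:gen_set_B}. Morphisms $\I\to\B$ correspond to solutions $\tau:X\to\B$ of $R$ by \cref{thm:von-dyck}. Solutions correspond to satisfying assignments of $\Psi(\Xi(\Theta(R)))$ by \cref{thm:eqn_sol_iff_formula_satisfied}, with $x\tau=(x_\lambda\varsigma,x_\rho\varsigma)$ for $x\in X$. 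Surjectivity is characterised by \cref{thm:gen_set_B} applied to $B=X\tau$. It then remains to check that $\Gamma(X)$ and the gcd constraint express conditions (i) and (ii) of \cref{thm:gen_set_B}.

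($\Rightarrow$) Let $\phi:\I\to\B$ be an epimorphism and set $\tau:x\mapsto[x]\phi$. By \cref{thm:von-dyck}(ii), $\tau$ is a solution to $R$ and $X\tau$ generates $\B$. By \cref{thm:eqn_sol_iff_formula_satisfied} there is a satisfying assignment $\varsigma$ of $\Psi(\Xi(\Theta(R)))$ with $(x_\lambda\varsigma,x_\rho\varsigma)=x\tau$. Applying \cref{thm:gen_set_B}(i) to $B=X\tau$ gives some $x$ with $x\tau=(m,0)$ or $(0,m)$ and $m>0$, which is exactly a true disjunct of $\Gamma(X)$. \cref{thm:gen_set_B}(ii) gives $\gcd\{\delta(x\tau):x\in X\}=1$, which is the gcd constraint since $\delta(x\tau)=x_\lambda\varsigma-x_\rho\varsigma$. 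One small care point: $B$ is a set, so distinct generators may share an image, but repeated entries do not change the gcd or the condition in (i).

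($\Leftarrow$) Let $\varsigma$ satisfy $\Phi(X,R)$. Since $\Psi$ forces every variable to be $\ge0$, the map $\tau:x\mapsto(x_\lambda\varsigma,x_\rho\varsigma)$ lands in $\B$. By \cref{thm:eqn_sol_iff_formula_satisfied} it is a solution to $R$. $\Gamma(X)$ gives condition (i) and the gcd atom gives condition (ii) of \cref{thm:gen_set_B} for $X\tau$, so $X\tau$ generates $\B$. Then \cref{thm:von-dyck}(i) yields an epimorphism $\I\to\B$.

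The main obstacle is bookkeeping rather than mathematics. First, the variables $x_\lambda,x_\rho$ for $x\in X$ appearing in $\Gamma$ and in the gcd must be the same variables used inside $\Psi(\Xi(\Theta(R)))$, and the correspondence $x\tau=(x_\lambda\varsigma,x_\rho\varsigma)$ must hold for exactly those $x\in X$; both are given by \cref{thm:eqn_sol_iff_formula_satisfied}. Second, the $\wh x$ and auxiliary variables play no role in the surjectivity conditions, and this is harmless because $\Inv\langle X\tau\rangle$ already contains the inverses. Finally, note that $\Phi(X,R)$ has size polynomial in $\|\Inv\pres{X}{R}\|$; this is not needed for the lemma itself but is what feeds \cref{thm:MAINB_EPAGRP_poly_sol} afterwards.
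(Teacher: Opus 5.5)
Your proof is correct and is essentially the paper's argument: both directions chain \cref{thm:von-dyck}, \cref{thm:eqn_sol_iff_formula_satisfied} and \cref{thm:gen_set_B} exactly as you do, with $\Gamma(X)$ and the gcd atom encoding conditions (i) and (ii) for $B=X\tau$. Your reading of $\Psi(\Xi(R))$ as $\Psi(\Xi(\Theta(R)))$ is the intended one; the paper's proof also writes $\Omega(X)$ where it means $\Gamma(X)$.
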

\begin{proof}
	Suppose there exists an epimorphism $\I\rightarrow\B$. By \cref{thm:von-dyck}(ii), this implies that there is a solution $\tau:X\rightarrow\B$ such that $\Inv\langle X\tau\rangle=\B$. By \cref{thm:eqn_sol_iff_formula_satisfied}, there exists a satisfying assignment $\varsigma:V\rightarrow\N$ for $\Psi(\Xi(R))$ such that $(x_\lambda\varsigma,x_\rho\varsigma)=x\tau$. By \cref{thm:gen_set_B}(i), $\Inv\langle X\tau\rangle=\B$ implies that there exists $x\in X$ such that $x\tau=(0,m)$ or $(m,0)$ for some $m>0$. Hence this $x_\lambda\varsigma=\lambda(x\tau)$ and $x_\rho\varsigma=\rho(x\tau)$ satisfy $\Omega(X)$. Finally, \cref{thm:gen_set_B}(ii), implies that 
	\begin{equation}
		\gcd\{x_\lambda\varsigma-x_\rho\varsigma:x\in X\} = \gcd\{\lambda(x\tau)-\rho(x\tau):x\in X\} = \gcd\{\delta(x\tau):x\in X\} = 1.
	\end{equation}
	
	For the converse, suppose that there exists a satisfying assignment $\varsigma:V\rightarrow\N$ to $\Phi(X,R)$. Then, according to \cref{thm:eqn_sol_iff_formula_satisfied}, $\varsigma$ satisfying $\Psi(\Xi(R))$ implies the mapping $\tau:X\rightarrow\B$ given by $x\mapsto(x_\lambda\varsigma,x_\rho\varsigma)$ is a solution to $R$ over $\B$. Since $\varsigma$ satisfies $\Omega(X)$ there exists $x\in X$ such that $x\tau=(x_\lambda\varsigma,x_\rho\varsigma)=(0,m)$ or $(m,0)$ for some $m>0$. Together with
	\begin{equation}
		\gcd\{x_\lambda\varsigma-x_\rho\varsigma:x\in X\} = 1 \quad\implies\quad \gcd\{\delta(x\tau):x\in X\} = 1
	\end{equation}
	we have by \cref{thm:gen_set_B} that $X\tau$ generates $\B$. Therefore there exists an epimorphism $\I\rightarrow\B$ by \cref{thm:von-dyck}(i).
\end{proof}

\begin{proof}[Proof of \cref{thmMAINA:epi_is_NP-complete}]
We have shown that 
\Epi is poly time reducible to \EPAGRP. This combined with \cref{thm:epi_is_NP-hard} gives the result.
\end{proof}

\bibliographystyle{plainurl}
\bibliography{refs}

\end{document}